 \numberwithin{equation}{section}
 \theoremstyle{plain}
 \newtheorem{theorem}[equation]{Theorem}
 \newtheorem{corollary}[equation]{Corollary}
 \newtheorem{lemma}[equation]{Lemma}
 \newtheorem{proposition}[equation]{Proposition}
 \newtheorem{question}[equation]{Question}
 \newtheorem{KochenSpeckerTheorem}[equation]{Kochen-Specker Theorem}
 \theoremstyle{definition}
 \newtheorem{definition}[equation]{Definition}
 \newtheorem{example}[equation]{Example}
 \numberwithin{figure}{section}
 \DeclareMathOperator{\Spec}{Spec}
 \DeclareMathOperator{\partSpec}{\mathit{p}-Spec}
 \DeclareMathOperator{\Ring}{\mathsf{Ring}}
 \DeclareMathOperator{\Set}{\mathsf{Set}}
 \DeclareMathOperator{\Top}{\mathsf{Top}}
 \DeclareMathOperator{\CommRing}{\mathsf{CommRing}}
 \DeclareMathOperator{\Cstar}{\mathsf{C^*}\mathsf{Alg}}
 \DeclareMathOperator{\CommCstar}{\mathsf{Comm}\mathsf{C^*}\mathsf{Alg}}
 \DeclareMathOperator{\Gelf}{Max}
 \DeclareMathOperator{\partGelf}{\mathit{p}-Max}
 \DeclareMathOperator{\Hom}{Hom}
 \DeclareMathOperator{\Fun}{Fun}
 \DeclareMathOperator{\Obj}{Obj}
 \newcommand{\C}{\mathscr{C}}
 \newcommand{\catC}{\mathcal{C}}
 \newcommand{\Z}{\mathbb{Z}}
 \newcommand{\Complex}{\mathbb{C}}
 \newcommand{\R}{\mathbb{R}}
 \newcommand{\setS}{\mathcal{S}}
 \newcommand{\M}{\mathbb{M}}
 \newcommand{\Ringop}{\Ring^{\op}}
 \newcommand{\CommRingop}{\CommRing^{\op}}
 \newcommand{\Cstarop}{\Cstar^{\op}}
 \newcommand{\CommCstarop}{\CommCstar^{\op}}
 \newcommand{\separate}{\bigskip}
 \newcommand{\smallseparate}{\medskip}
 \newcommand{\invlim}{\varprojlim}
 \newcommand{\op}{\textnormal{op}}
 \newcommand{\restrict}{\mathfrak{r}}
 \newcommand{\comm}{\perp}
 \newcommand{\cat}{\mathcal}
\begin{document}

\title[Obstructing extensions of Spec]{Obstructing extensions of the functor Spec\\ to noncommutative rings}
\author{Manuel L. Reyes}
\address{Department of Mathematics\\
University of California, San Diego\\
9500 Gilman Drive \#0112\\
La Jolla, CA 92093-0112}
\email{m1reyes@math.ucsd.edu}
\urladdr{http://math.ucsd.edu/~m1reyes/}
\thanks{The author was supported by a Ford Foundation Predoctoral Diversity Fellowship at
the University of California, Berkeley, and a University of California President's Postdoctoral Fellowship at the
University of California, San Diego.
}

\date{June 2, 2011}
\subjclass[2010]{Primary: 16B50, 14A22, 46L85; Secondary: 81P15}
\keywords{spectrum functor, matrix algebra, commutative subring, partial algebra, prime partial ideal, Kochen-Specker Theorem}

\begin{abstract}
This paper concerns contravariant functors from the category of rings to the category of
sets whose restriction to the full subcategory of commutative rings is isomorphic to the prime
spectrum functor $\Spec$.
The main result reveals a common characteristic of these functors: every such functor
assigns the empty set to $\M_n(\Complex)$ for $n \geq 3$.
The proof relies, in part, on the Kochen-Specker Theorem of quantum mechanics.
The analogous result for noncommutative extensions of the Gelfand spectrum functor for
$C^*$-algebras is also proved.
\end{abstract}

\maketitle

\section{Introduction}
\label{introduction section}

The prime spectrum of commutative rings and the Gelfand spectrum of commutative
$C^*$-algebras play a foundational role in the classical link between algebra and geometry,
since these spectra form the underlying point-sets of the spaces attached to a
commutative ring or $C^*$-algebra.
It is tempting to hope that one could extend these spectra to the noncommutative
setting in order to construct the ``underlying set of a noncommutative space.''
The main results of this paper (Theorems~\ref{main theorem} and~\ref{main C* theorem}
below) hinder naive attempts to do so by obstructing the existence of functors that
extend these spectra.

\smallseparate

In order to produce an obstruction, one must first fix the desired properties of the
``noncommutative spectrum'' in question.
Consider the prime spectrum $\Spec$. From the viewpoint of $\Spec$ as an underlying
point-set, two facts of key importance are (1)~the spectrum of every nonzero commutative
ring is nonempty, and (2)~the prime spectrum construction can be regarded as a
contravariant functor from the category of commutative rings to the category of sets,
\[
\Spec \colon \CommRing \to \Set.
\]
(For commutative rings, $\Spec$ is easily made into a functor because the inverse image
of a prime ideal under a ring homomorphism is again prime.)

Over the years, many different extensions of the prime spectrum to noncommutative rings have
been studied. Let $F$ be a rule assigning to each ring $R$ a set $F(R)$, such that for every
commutative ring $C$ one has $F(C) \cong \Spec(C)$. There are two desirable properties that
such an invariant may possess. 
\begin{description}
\item[Property A] \emph{For every nonzero ring $R$, the set $F(R)$ is nonempty.}
\item[Property B] \emph{The invariant $F$ can be made into a set-valued functor extending
$\Spec$,} in the sense that the assignment $R \mapsto F(R)$ is the object part of a functor
$F$ whose restriction to the category of commutative rings is isomorphic to $\Spec$.
\end{description}
Examples of invariants that satisfy Property~A include the set of prime ideals of a
noncommutative ring, Goldman's prime torsion theories~\cite{Goldman}, and the ``left spectrum''
of Rosenberg~\cite{Rosenberg1}. (These invariants satisfy Property~A because they all have
elements corresponding to maximal one- or two-sided ideals.)
Some invariants that satisfy Property~B are the spectrum of the ``abelianization'' $R \mapsto \Spec(R/[R,R])$,
the set of completely prime ideals, and the ``field spectrum'' of Cohn~\cite{Cohn}.

Each of the different ``noncommutative spectra'' listed above possess only one of the two
properties.
Our first main result states that this situation is inevitable.

\begin{theorem}
\label{main theorem}
Let $F$ be a contravariant functor from the category of rings to the category of sets whose
restriction to the full subcategory of commutative rings is isomorphic to $\Spec$. Then
$F(\M_n(\Complex)) = \varnothing$ for any $n \geq 3$.
\end{theorem}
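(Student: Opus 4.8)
The plan is to show that any element $x \in F(\M_n(\Complex))$ would furnish a $\{0,1\}$-valued function on the projections of $\M_n(\Complex)$ of precisely the type that the Kochen-Specker Theorem forbids when $n \ge 3$; since no such function exists, $F(\M_n(\Complex))$ must be empty.

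Fix a natural isomorphism $\eta \colon F|_{\CommRing} \xrightarrow{\ \sim\ } \Spec$. For each commutative subring $C \subseteq \M_n(\Complex)$, with inclusion $\iota_C \colon C \hookrightarrow \M_n(\Complex)$, contravariance gives a map $F(\iota_C) \colon F(\M_n(\Complex)) \to F(C)$, so an element $x \in F(\M_n(\Complex))$ determines a prime $\mathfrak{p}_C := \eta_C(F(\iota_C)(x)) \in \Spec(C)$. I would begin by recording two facts. First, if $C$ is in addition a $\Complex$-subalgebra, then $C$ is finite dimensional over $\Complex$ (being a $\Complex$-subspace of $\M_n(\Complex)$), hence Artinian, so $\mathfrak{p}_C$ is maximal and the field $C/\mathfrak{p}_C$ is a finite extension of the algebraically closed field $\Complex$, i.e.\ equals $\Complex$; thus $\mathfrak{p}_C$ is the kernel of a unique $\Complex$-algebra homomorphism $\phi_C \colon C \to \Complex$. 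Second, for commutative $\Complex$-subalgebras $C' \subseteq C$, the functoriality of $F$ together with the naturality of $\eta$ forces $\mathfrak{p}_{C'}$ to be the contraction of $\mathfrak{p}_C$ along $C' \hookrightarrow C$; equivalently, $\phi_{C'} = \phi_C|_{C'}$.

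Next I would glue the homomorphisms $\phi_C$ together. Every projection $p \in \M_n(\Complex)$ lies in the finite-dimensional commutative $\Complex$-subalgebra $\Complex[p]$, so setting $v(p) := \phi_{\Complex[p]}(p)$ yields a well-defined function on projections; its values lie in $\{0,1\}$ because $p$ is idempotent, and the compatibility above shows $v(p) = \phi_A(p)$ for every commutative $\Complex$-subalgebra $A$ containing $\Complex[p]$. Now take an orthonormal basis $b_1, \dots, b_n$ of $\Complex^n$ with associated rank-one projections $q_1, \dots, q_n$, and let $A \cong \Complex^n$ be the commutative subalgebra they span. Since $\Spec(\Complex^n)$ has exactly $n$ points, $\phi_A$ is one of the $n$ coordinate projections, so exactly one of the numbers $v(q_1), \dots, v(q_n)$ equals $1$ and the rest equal $0$. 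Hence $v$ assigns the value $1$ to exactly one member of every orthogonal family of rank-one projections summing to the identity: this is exactly the sort of ``hidden-variable'' assignment that the Kochen-Specker Theorem rules out for $n \ge 3$. (Equivalently, the same data produce a function on all of $\M_n(\Complex)$ which is additive and multiplicative on every commutative $\Complex$-subalgebra and sends each matrix into its spectrum -- another object excluded by Kochen-Specker.) This contradiction shows $F(\M_n(\Complex)) = \varnothing$.

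I expect the main obstacle to be the careful handling of the second fact above: one must verify that the contravariant functoriality of $F$ and the naturality squares of $\eta$ interlock correctly, so that the family $(\mathfrak{p}_C)_C$ really is compatible under inclusions of commutative subrings. Beyond that, one needs only to invoke the Kochen-Specker Theorem in a form valid for all $n \ge 3$ -- which follows from its classical three-dimensional statement, for instance by restricting $v$ to the projections dominated by a rank-three projection $E$ with $v(E) = 1$, on which $v$ retains the requisite property. Note that the argument never uses injectivity or surjectivity of the comparison $x \mapsto (\mathfrak{p}_C)_C$: extracting the forbidden valuation from a single hypothetical $x$ already suffices.
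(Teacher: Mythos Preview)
Your proposal is correct and follows essentially the same route as the paper. The paper packages the compatible family $(\mathfrak{p}_C)_C$ you build as an element of a universal functor $\partSpec$ (the right Kan extension of $\Spec$, realized as $\invlim_{C}\Spec(C)$), proves $\partSpec(\M_n(\Complex))=\varnothing$ via Kochen--Specker, and then invokes the universal map $F\to\partSpec$; you simply inline that construction, producing the forbidden valuation directly from a hypothetical $x$ without naming the intermediate object. The mathematical content---functoriality plus naturality yielding a coherent family of primes, the passage from primes in finite-dimensional commutative $\Complex$-subalgebras to characters, and the appeal to Kochen--Specker (including the reduction from $n>3$ to $n=3$)---is the same in both.
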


\smallseparate

Next we state the analogous result in the context of $C^*$-algebras.
For our purposes, we define the \emph{Gelfand spectrum} of a commutative unital $C^*$-algebra
$A$ to be the set $\Gelf(A)$ of maximal ideals of $A$; these are necessarily closed in $A$.
The set $\Gelf(A)$ is in bijection with the set of \emph{characters} of $A$, which are the nonzero
multiplicative linear functionals (equivalently, unital algebra homomorphisms) $A \to \Complex$;
the correspondence associates to each character its kernel (see~\cite[Thm.~I.2.5]{Davidson}).
This is easily given the structure of a contravariant functor
\[
\Gelf \colon \CommCstar \to \Set.
\]
With appropriate topologies taken into account, the Gelfand spectrum functor provides a
contravariant equivalence between the category of commutative unital $C^*$-algebras and
the category of compact Hausdorff spaces.

The following analogue of Theorem~\ref{main theorem} provides a similar obstruction to
any noncommutative extension of the Gelfand spectrum functor.

\begin{theorem}
\label{main C* theorem}
Let $F$ be a contravariant functor from the category of unital $C^*$-algebras to the
category of sets whose restriction to the full subcategory of commutative unital
$C^*$-algebras is isomorphic to $\Gelf$. Then $F(\M_n(\Complex)) = \varnothing$ for any
$n \geq 3$.
\end{theorem}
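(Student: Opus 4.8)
The plan is to assume, for contradiction, that $F(\M_n(\Complex)) \neq \varnothing$ for some $n \geq 3$, fix a point $p \in F(\M_n(\Complex))$, and use it to manufacture a truth-value assignment on the projections of $\M_n(\Complex)$ of the sort ruled out by the Kochen--Specker Theorem; I will mirror the proof of Theorem~\ref{main theorem}, with commutative $C^*$-subalgebras and characters playing the roles of commutative subrings and prime ideals. Let $\mathcal{C}$ denote the poset, ordered by inclusion, of commutative $C^*$-subalgebras of $\M_n(\Complex)$ that contain its identity. Each such $C$ is finite-dimensional, hence $*$-isomorphic to some $\Complex^k$, so $\Gelf(C)$ is just its finite set of characters $C \to \Complex$. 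The inclusion $\iota_C \colon C \hookrightarrow \M_n(\Complex)$ is a morphism of unital $C^*$-algebras, so $F(\iota_C) \colon F(\M_n(\Complex)) \to F(C)$; composing with the component at $C$ of the natural isomorphism $F|_{\CommCstar} \cong \Gelf$ sends $p$ to a character $\chi_C \colon C \to \Complex$.

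The first real step is to verify that the family $(\chi_C)_{C \in \mathcal{C}}$ is coherent, in the sense that $\chi_C|_{C'} = \chi_{C'}$ whenever $C' \subseteq C$. This is obtained by combining the factorization $\iota_C \circ (C' \hookrightarrow C) = \iota_{C'}$, the contravariant functoriality of $F$, and the naturality squares of the chosen isomorphism $F|_{\CommCstar} \cong \Gelf$, together with the fact that for commutative $C^*$-algebras $\Gelf$ acts on a morphism by precomposition of characters. This step is where one must keep track of the natural isomorphism carefully, but it is routine bookkeeping.

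One then defines $v(e) = \chi_C(e)$ on the set of projections $e = e^* = e^2$ of $\M_n(\Complex)$, where $C \in \mathcal{C}$ is any subalgebra containing $e$; coherence makes this well defined, since any two such subalgebras contain $\Complex 1 + \Complex e$. Because $\chi_C$ is an algebra homomorphism and $e^2 = e$, we have $v(e) \in \{0,1\}$. If $e_1, \dots, e_k$ are mutually orthogonal projections with $e_1 + \dots + e_k = 1$, they pairwise commute and hence generate a single $C \in \mathcal{C}$, giving $\sum_i v(e_i) = \chi_C(1) = 1$, so exactly one $e_i$ has value $1$. Specializing to the rank-one projections onto the lines of $\Complex^n$, $v$ becomes a map from these lines to $\{0,1\}$ that assigns the value $1$ to exactly one member of each orthonormal basis. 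For $n \geq 3$ the Kochen--Specker Theorem says no such map exists, so $F(\M_n(\Complex)) = \varnothing$.

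I expect that the conceptual heart of the argument --- and its only genuinely hard input --- is the Kochen--Specker Theorem itself; the surrounding work, namely organizing the commutative $C^*$-subalgebras into a poset, transporting $p$ to a coherent system of characters, and checking that $v$ satisfies the additivity relation on orthogonal resolutions of the identity, is elementary. The one extra point to flag beyond the proof of Theorem~\ref{main theorem} is that every commutative $C^*$-subalgebra of $\M_n(\Complex)$ is finite-dimensional, so that $\Gelf$ and $\Spec$ agree on the subalgebras in play and no complications peculiar to $C^*$-algebras intervene.
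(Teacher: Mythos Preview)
Your argument is correct and is essentially the paper's proof with the universal scaffolding unpacked. The paper first builds a terminal object $\partGelf$ in $\restrict^{-1}(\Gelf)$, identifies $\partGelf(A)$ with $\invlim_{C\in\C^*(A)^{\op}}\Gelf(C)$, and then observes that $\partGelf(\M_n(\Complex))=\varnothing$ via Kochen--Specker, so the unique map $F(\M_n(\Complex))\to\partGelf(\M_n(\Complex))$ forces $F(\M_n(\Complex))=\varnothing$. Your coherent family $(\chi_C)$ is precisely an element of that inverse limit, and your coherence check is exactly the verification that the maps $F(\M_n(\Complex))\to\Gelf(C)$ assemble into a cone; in other words, you are reproducing by hand the component at $\M_n(\Complex)$ of the natural transformation $F\to\partGelf$ that the paper obtains from terminality. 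The paper's route buys a reusable structural statement (the Kan-extension description of $\partGelf$) and a clean separation of the two ingredients, while your route is shorter and avoids introducing partial $C^*$-algebras, normal elements, and partial maximal ideals, at the cost of that extra generality.
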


Of course, the statements of Theorems~\ref{main theorem} and~\ref{main C* theorem} with
the category of sets replaced by the category $\Top$ of topological spaces follow as
immediate corollaries.

\smallseparate

There are plenty of results stating that a \emph{particular} spectrum of a ring or algebra is
empty.
For instance, it is easy to find examples of noncommutative $C^*$-algebras that have no
characters.
In the realm of algebra, one can think of rings that have no homomorphisms to any division
ring as having empty spectra.
For one more example, S.\,P.~Smith suggested a notion of ``closed point'' such that every
infinite dimensional simple $\Complex$-algebra has no closed points~\cite[p.~2170]{Smith}.
Notice that each of these examples assumes a fixed notion of spectrum.
The main feature setting Theorem~\ref{main theorem} apart from the arguments mentioned
above is that it applies to \emph{any} notion of spectrum satisfying Properties~A and~B
mentioned above, and similarly for Theorem~\ref{main C* theorem}.
Indeed, these spectra need not be defined in terms of ideals (either one-sided or
two-sided) or modules at all.

\smallseparate

\textbf{Outline of the proof.}
The proofs of Theorems~\ref{main theorem} and~\ref{main C* theorem} proceed roughly
as follows: (1)~construct a functor that is ``universal'' among all functors whose restriction
to the commutative subcategory is the spectrum functor; (2)~show that this functor
assigns the empty set to $\M_n(\Complex)$; (3)~by universality, conclude that
\emph{every} such functor does the same.

It is perhaps surprising that a key tool used for step~(2) above is the
\emph{Kochen-Specker Theorem}~\cite{KochenSpecker} of quantum mechanics, which forbids
the existence of certain hidden variable theories.
Recently this result has surfaced in the context of noncommutative geometry in the
\emph{Bohrification} construction introduced by C.~Heunen, N.~Landsman, and B.~Spitters
in~\cite[Thm.~6]{HeunenLandsmanSpitters}.
Those authors use the Kochen-Specker Theorem to show that a certain ``space'' associated to
the $C^*$-algebra of bounded operators on a Hilbert space of dimension $\geq 3$ has no points.
This is obviously close in spirit to Theorems~\ref{main theorem} and~\ref{main C* theorem}.
A common theme between that paper and the present one is the \emph{focus on commutative
subalgebras of a given algebra}, and we acknowledge the inspiration and influence of that work
on ours.

In the ring-theoretic case, step~(1) is achieved in Section~\ref{partial algebras section}.
The universal functor $\partSpec$ is defined in terms of \emph{prime partial ideals}, which
requires an exposition of partial algebras along with their ideals and morphisms.
Step~(2) is carried out in Section~\ref{Kochen-Specker section}, where we establish a
connection between prime partial ideals and the Kochen-Specker Theorem.
The proof of Theorem~\ref{main theorem} (basically Step~(3) above) is given in
Section~\ref{proof section}, and it is accompanied by some corollaries.
In Section~\ref{C* section} we prove Theorem~\ref{main C* theorem} by quickly
following Steps~(1)--(3) in the context of $C^*$-algebras, and we state a few of its corollaries.

\separate

\textbf{Generalizations and positive implications.}
Since the present results were announced, stronger obstructions to spectrum functors have
been proved by B.~van den Berg and C.~Heunen in~\cite{BergHeunen2}.
Those results hinder the extension of $\Spec$ and $\Gelf$ even when they are considered as
functors whose codomains are over-categories of $\Top$, such as the categories of locales
and toposes.
However, one can view these obstructions in a positive light: it seems that the actual
construction of contravariant functors extending the classical spectra necessitates a creative
choice of target category $\catC$ that contains $\Top$ (or $\Set$, if one forgets the topology).
From this perspective, the construction of ``useful'' noncommutative spectrum functors
extending the classical ones seems to remain an interesting issue.

\separate

\textbf{Conventions.}
All rings are assumed to have identity and ring homomorphisms are assumed to preserve the
identity, except where explicitly stated otherwise.
The categories of unital rings and unital commutative rings are respectively denoted by $\Ring$
and $\CommRing$.
We will consider $\Spec$ as a contravariant functor from the category of commutative rings
to the category of sets, instead of topological spaces, unless indicated otherwise.
A contravariant functor $F \colon \catC_1 \to \catC_2$ can also be viewed as a covariant functor out of the
opposite category $F \colon \catC_1^{\op} \to \catC_2$. For the most part, we will view contravariant
functors as functors that reverse the direction of arrows, in order to avoid dealing with ``opposite arrows.''
But when it is convenient we will occasionally change viewpoint and consider contravariant functors as
covariant functors out of the opposite category. 
Given a category $\catC$, we will often write $C \in \catC$ to mean that $C$ is an object of $\catC$.
When there is danger of confusion, we will write the more precise expression $C \in \Obj(\catC)$.

\section{A universal Spec functor from prime partial ideals}
\label{partial algebras section}

In this section we will define a functor $\partSpec$ that is universal among all candidates for
a ``noncommutative $\Spec$.''
We set the stage for its construction by describing the universal property that we seek.

Given categories $\catC$ and $\catC'$, we let $\Fun(\catC, \catC')$ denote the category of
(covariant) functors from $\catC$ to $\catC'$ whose morphisms are natural transformations.
(This category need not have small Hom-sets.)
The inclusion of categories $\CommRing \hookrightarrow \Ring$ induces a \emph{restriction}
functor
\begin{align*}
\restrict \colon \Fun(\Ringop, \Set) &\to \Fun(\CommRingop, \Set) \\
F &\mapsto F|_{\CommRingop},
\end{align*}
which is defined in the obvious way on morphisms (i.e., natural transformations). Now we define
the ``fiber category'' over $\Spec \in \Fun(\CommRingop, \Set)$ to be the category
$\restrict^{-1}(\Spec)$ whose objects are pairs $(F, \phi)$ with $F \in \Fun(\Ringop, \Set)$ and
$\phi \colon \restrict(F) \overset{\sim}{\longrightarrow} \Spec$ an isomorphism of functors, in
which a morphism $\psi \colon (F, \phi) \to (F', \phi')$ is a morphism $\psi \colon F \to F'$ of
functors such that $\phi' \circ \restrict(\psi) = \phi$, i.e.\ the following commutes:
\[
\xymatrix{
\restrict(F) \ar[rr]^{\restrict(\psi)} \ar[dr]_{\phi} & & \restrict(F') \ar[dl]^{\phi'} \\
& \Spec &.
}
\]
(Our use of the terminology ``fiber category'' and notation $\restrict^{-1}$ is slightly different
from other instances in the literature. The main difference is that we are considering objects that
map to $\Spec$ under $\restrict$ \emph{up to isomorphism}, rather than ``on the nose.'')

The category $\restrict^{-1}(\Spec)$ is of fundamental importance to us; we are precisely
interested in those contravariant functors from $\Ring$ to $\Set$ whose restriction to
$\CommRing$ is isomorphic to $\Spec$. The ``universal $\Spec$ functor'' $\partSpec$ that we seek
is a terminal object in this category.
The rest of this section is devoted to defining this functor and proving its universal property.

\separate

The functor $\partSpec$ to be constructed is best understood in the context of partial algebras,
whose definition we recall here. The notion of a partial algebra was defined in~\cite[\S2]{KochenSpecker}.
(A more precise term for this object would probably be \emph{partial commutative algebra,}
but we retain the historical and more concise terminology in this paper.)

\begin{definition}
A \emph{partial algebra} over a commutative ring $k$ is a set $R$ with a reflexive symmetric binary
relation $\comm \, \subseteq R \times R$ (called \emph{commeasurability}), partial addition and
multiplication operations $+$ and $\cdot$ that are functions $\comm \, \to R$, a scalar multiplication
operation $k \times R \to R$, and elements $0,1 \in A$ such that the following axioms are satisfied:
\begin{enumerate}
\item For all $a \in R$, $a \comm 0$ and $a \comm 1$;
\item The relation $\comm$ is preserved by the partial binary operations: for all $a_1, a_2, a_3 \in R$ with
$a_i \comm a_j$ ($1 \leq i,j \leq 3$) and for all $\lambda \in k$, one has $(a_1+a_2) \comm a_3$,
$(a_1 a_2) \comm a_3$, and $(\lambda a_1) \comm a_2$;
\item If $a_i \comm a_j$ for $1 \leq i,j \leq 3$, then the values of all (commutative) polynomials in
$a_1$, $a_2$, and $a_3$ form a commutative $k$-algebra.
\end{enumerate}
A \emph{partial ring} is a partial algebra over $k = \mathbb{Z}$.
\end{definition}

The third axiom of a partial algebra appears as stated in~\cite[p.~64]{KochenSpecker}. While the
axiom is succinct, it can be instructive to unravel its meaning. The third axiom is equivalent to the
following collection of axioms:
\begin{itemize}
\item[(3.0)] The element $0 \in R$ is an additive identity and $1 \in R$ is a multiplicative identity;
\item[(3.1)] Addition and multiplication are commutative when defined: if $a \comm b$ in $R$,
then $a+b = b+a$ and $ab = ba$;
\item[(3.2)] Addition and multiplication are associative on commeasurable triples: if $a \comm b$,
$a \comm c$, and $b \comm c$ in $R$, then $(a + b) + c = a + (b + c)$ and
$(a \cdot b) \cdot c = a \cdot (b \cdot c)$;
\item[(3.3)] Multiplication distributes over addition on commeasurable triples: if $a \comm b$,
$a \comm c$, and $b \comm c$ in $R$, then $a \cdot (b + c) = a \cdot b + a \cdot c$;
\item[(3.4)] Each element $a \in R$ is commeasurable to an element $-a \in R$ that is an additive
inverse to $a$ and such that $a \comm r \implies -a \comm r$ for all $r \in R$ (see the
paragraph before Lemma~\ref{eigenvalue lemma} for a discussion of uniqueness of inverses);
\item[(3.5)] Multiplication is $k$-bilinear.
\end{itemize}

\begin{definition}
A \emph{commeasurable subalgebra} of a partial $k$-algebra $R$ is a subset $C \subseteq R$
consisting of pairwise commeasurable elements that is closed under $k$-scalar multiplication
and the partial binary operations of $R$. (Thus the operations of $R$ restricted to
$C$ endow $C$ with the structure of a commutative $k$-algebra.)
\end{definition}

In particular, given any $a \in R$ one can evaluate every polynomial in $k[x]$ at $x = a$ to obtain
commeasurable $k$-subalgebra $k[a] \subseteq R$. 
More generally, any set of pairwise commeasurable elements of $R$ is contained in a commeasurable
$k$-subalgebra of $R$. Notice also that $R$ is the union of its commeasurable $k$-subalgebras.

When we need to distinguish between a $k$-algebra and a partial $k$-algebra, we shall refer to
the former as a ``full'' algebra. As the following example shows,  every full algebra can be
considered as a partial algebra in a standard way. 

\begin{example}
Let $R$ be a (full) algebra over a commutative ring $k$. We may define a relation $\comm \, \subseteq R \times R$ by
$a \comm b$ if and only if $ab = ba$ (i.e., $[a,b]=0$). This relation along with the addition, multiplication, and scalar multiplication
inherited from $R$ make $R$ into a partial algebra over $k$. For us, this is the prototypical example of a partial
algebra. We will refer to this as the ``standard partial algebra structure'' on $R$.
\end{example}

Considering a full algebra $R$ as a partial algebra is, in effect, a way to restrict our attention to
\emph{only} the commutative subalgebras of $R$. This is further amplified when one applies the
notions (defined below) of morphisms of partial algebras and partial ideals to the algebra $R$.

\begin{example}
Another important example of a partial algebra is considered in~\cite{KochenSpecker}. Let $A$ be a
unital $C^*$-algebra, and let $A_{sa}$ denote the set of self-adjoint elements of $A$. Notice that
the sum and product of two commuting self-adjoint elements is again self-adjoint, and that real scalar
multiplication preserves $A_{sa}$. 
So if $\comm \, \subseteq A_{sa} \times A_{sa}$ is the relation of commutativity (as in the previous
example), then $A_{sa}$ forms a partial algebra over $\mathbb{R}$.
\end{example}

Just as one may study ideals of a $k$-algebra, we will consider ``partial ideals'' of a partial $k$-algebra.

\begin{definition}
Let $R$ be a partial algebra over a commutative ring $k$. A subset $I \subseteq R$ is a \emph{partial ideal} of
$R$ if, for all $a, b \in R$ such that $a \comm b$, one has:
\begin{itemize}
\item $a, b \in I \implies a+b \in I$;
\item $b \in I \implies ab \in I$.
\end{itemize}
Equivalently, a partial ideal of $R$ is a subset $I \subseteq R$ such that, for every commeasurable
subalgebra $C \subseteq R$, the intersection $I \cap C$ is an ideal of $C$.
If $R$ is a (full) $k$-algebra, then a partial ideal of $R$ is a partial ideal of the standard partial
algebra structure on $R$. 
\end{definition}

To better understand the set of partial ideals of an arbitrary (full or partial) algebra, it helps to
consider some general examples.
Let $R$ be an algebra over a commutative ring $k$. If $I$ is a left, right, or two-sided ideal of $R$, then $I$ is
a clearly a partial ideal of $R$. Furthermore, when $R$ is commutative the partial ideals of $R$ are precisely the
ideals of $R$.

\begin{lemma}
\label{improper partial ideal}
Let $I$ be a partial ideal of a partial $k$-algebra $R$. Then $I = R$ if and only if $1 \in I$.
\end{lemma}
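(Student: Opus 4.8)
The plan is to prove both implications directly from the definition of a partial ideal, using the special role of the element $1$ guaranteed by the partial algebra axioms. The forward direction is trivial: if $I = R$, then since $1 \in R$ we have $1 \in I$.

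For the reverse direction, suppose $1 \in I$; I want to show every $a \in R$ lies in $I$. The key observation is axiom~(1): for every $a \in R$ one has $a \comm 1$. Thus the pair $(a, 1)$ is commeasurable, and since $1 \in I$, the second bullet of the definition of a partial ideal — namely $b \in I \implies ab \in I$ for commeasurable $a \comm b$ — applies with $b = 1$. This gives $a \cdot 1 \in I$. Finally, by axiom~(3.0) (or equivalently the third axiom, which makes $1$ a multiplicative identity in the commutative algebra $k[a,1]$), we have $a \cdot 1 = a$, so $a \in I$. Since $a \in R$ was arbitrary, $I = R$.

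I expect no real obstacle here; the only mild subtlety is bookkeeping about \emph{which} version of the axioms to cite — one can invoke either the compact third axiom (the values of polynomials in $a$ and $1$ form a commutative $k$-algebra, in which $1$ is the identity) or its unpacked form~(3.0). Either way the argument is a one-line application of commeasurability with $1$. Alternatively, one could phrase the whole thing via the equivalent characterization of partial ideals: for every commeasurable subalgebra $C \subseteq R$, the set $I \cap C$ is an ideal of $C$; applying this to $C = k[a] \ni 1$ and using $1 \in I \cap C$ forces $I \cap C = C$, hence $a \in I$. This second phrasing makes the proof almost immediate from the commutative case, since an ideal of a commutative ring containing $1$ is the whole ring.
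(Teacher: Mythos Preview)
Your proof is correct and follows essentially the same route as the paper's: use that $a \comm 1$ for every $a \in R$ and apply the partial-ideal absorption property with $b = 1$ to conclude $a = a \cdot 1 \in I$. The paper only writes out the ``if'' direction (the other being trivial), and does so in one line.
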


\begin{proof}
(``If'' direction.) If $1 \in I$, then $1 \comm R$ gives $R = (R \cdot 1) \subseteq I$. Hence $I = R$.
\end{proof}

\begin{proposition}
\label{division ring partial ideals}
Let $D$ be a division ring. Then the only partial ideals of $D$ are $0$ and $D$. 
\end{proposition}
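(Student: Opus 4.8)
The plan is to show that any nonzero partial ideal $I$ of a division ring $D$ must contain $1$, and then invoke Lemma~\ref{improper partial ideal} to conclude $I = D$. So suppose $I \neq 0$, and pick a nonzero element $a \in I$. The idea is to work entirely inside a single commeasurable subalgebra of $D$ where ordinary ring theory applies.

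The key observation is that $a$ and its inverse $a^{-1}$ commute (both lie in the commutative subring $k[a]$ — wait, $a^{-1}$ need not be a polynomial in $a$ — but at least $a$ and $a^{-1}$ commute since $a a^{-1} = 1 = a^{-1} a$), so $a \comm a^{-1}$ in the standard partial algebra structure on $D$. Then the second clause in the definition of partial ideal, applied to the commeasurable pair $(a^{-1}, a)$ with $a \in I$, gives $a^{-1} \cdot a = 1 \in I$. Alternatively, and perhaps more cleanly: the set $\{a, a^{-1}\}$ consists of pairwise commeasurable elements, hence is contained in some commeasurable subalgebra $C \subseteq D$; by the equivalent formulation of partial ideal, $I \cap C$ is an ideal of the commutative ring $C$; since $a \in I \cap C$ is a unit of $C$ (its inverse $a^{-1}$ lies in $C$), we get $I \cap C = C \ni 1$, so $1 \in I$.

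By Lemma~\ref{improper partial ideal}, $1 \in I$ forces $I = D$. Combined with the trivial observation that $0$ and $D$ are indeed partial ideals, this shows these are the only two.

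**The main subtlety** — really the only thing requiring care — is the justification that $a$ and $a^{-1}$ are commeasurable, i.e.\ that one is allowed to pass to a commeasurable subalgebra containing both. In the standard partial algebra structure on a full algebra, commeasurability is literally commutativity of elements, and $a^{-1} a = a a^{-1} = 1$, so this is immediate; the remark in the excerpt that "any set of pairwise commeasurable elements of $R$ is contained in a commeasurable $k$-subalgebra" then does the rest. No computation of any substance is needed beyond this.
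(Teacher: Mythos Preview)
Your argument is correct and is essentially identical to the paper's proof: pick a nonzero $a \in I$, note that $a \comm a^{-1}$ (since they commute in $D$), conclude $1 = a^{-1} a \in I$, and apply Lemma~\ref{improper partial ideal}. The paper simply states this in two lines without the alternative commeasurable-subalgebra phrasing, but the content is the same.
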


\begin{proof}
Suppose that $I \subseteq D$ is a nonzero partial ideal, and let $0 \neq a \in I$. Then $a \comm a^{-1}$, so
$1 = a^{-1} \cdot a \in I$. It follows from Lemma~\ref{improper partial ideal} that $I = D$.
\end{proof}

Yet another example of a partial ideal in an arbitrary ring $R$ is the set $N \subseteq R$ of
nilpotent elements of $R$.
Indeed, for any commutative subring $C$ of $R$, $N \cap C$ is the nilradical of $C$ and hence
is an ideal of $C$.
It is well-known that the set of nilpotent elements of a ring $R$ is not even closed under addition
for many noncommutative rings $R$. In fact, it is hard to find \emph{any} structural properties
that this set possesses for a general ring $R$, making this observation noteworthy.
(This example also illustrates that ring theorists must take particular care not to impose their
usual mental images of ideals upon the notion of a partial ideal.)

\separate

We now introduce a notion of prime partial ideal, which will provide a type of ``spectrum.''

\begin{definition}
\label{prime partial ideal definition}
A partial ideal $P$ of a partial $k$-algebra $R$ is \emph{prime} if $P \neq R$ and whenever $x \comm y$
in $A$, $xy \in P$ implies that either $x \in P$ or $y \in P$.
Equivalently, a partial ideal $P$ of $R$ is prime if $P \subsetneq R$ and for every commeasurable
subalgebra $C \subseteq R$, $P \cap C$ is a prime ideal of $C$. The set of prime partial ideals of a
(full) $k$-algebra $R$ is denoted $\partSpec(R)$.
\end{definition}

If $R$ is a commutative $k$-algebra, then the prime partial ideals of $R$ are precisely the prime ideals of $R$.
Now the fact that $\Spec \colon \CommRing \to \Set$ defines a (contravariant) functor depends on the fact
prime ideals behave well under homomorphisms of commutative rings. It turns out that prime partial ideals behave
just as well, provided that one uses the ``correct'' notion of a morphism of partial algebras. This is proved in
Lemma~\ref{preimage lemma} below. The following definition was given in~\cite[\S2]{KochenSpecker}.

\begin{definition}
Let $R$ and $S$ be partial algebras over a commutative ring $k$. A \emph{morphism of partial algebras} is a
function $f \colon R \to S$ such that, for every $\lambda \in k$ and all $a, b \in R$ with $a \comm b$,
\begin{itemize}
\item $f(a) \comm f(b)$,
\item $f(\lambda a) = \lambda f(a)$,
\item $f(a + b) = f(a) + f(b)$,
\item $f(ab) = f(a) f(b)$,
\item $f(0) = 0$ and $f(1) = 1$.
\end{itemize}
(In other words, $f$ preserves the commeasurability relation and its restriction to every
commeasurable subalgebra $C \subseteq R$ is a homomorphism of commutative $k$-algebras
$f|_C \colon C \to f(C)$.)
\end{definition}

Of course, any algebra homomorphism $R \to S$ of $k$-algebras is also a morphism of partial algebras when
$R$ and $S$ are considered as partial algebras.

\begin{lemma}
\label{preimage lemma}
Let $f \colon R \to S$ be a morphism of partial $k$-algebras, and let $I$ be a partial ideal of $S$. 
\begin{enumerate}
\item The set $f^{-1}(I) \subseteq R$ is a partial ideal of $R$.
\item If $I$ is prime, then $f^{-1}(I)$ is also prime.
\end{enumerate}
This holds, in particular, when $R$ and $S$ are (full) algebras, $f$ is a $k$-algebra homomorphism,
and $I$ is a (prime) partial ideal of $S$.
\end{lemma}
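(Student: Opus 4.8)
The plan is to verify both claims by reducing everything to the already-established characterization of (prime) partial ideals in terms of commeasurable subalgebras, exploiting the fact that a morphism of partial algebras restricts to an honest $k$-algebra homomorphism on each commeasurable subalgebra. Recall from the definitions that $J \subseteq R$ is a partial ideal precisely when $J \cap C$ is an ideal of $C$ for every commeasurable subalgebra $C \subseteq R$, and $P \subseteq R$ is a prime partial ideal precisely when $P \subsetneq R$ and $P \cap C$ is a prime ideal of $C$ for every such $C$. So both parts will follow once we control the intersection $f^{-1}(I) \cap C$ for an arbitrary commeasurable subalgebra $C$ of $R$.

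For part (1), I would fix a commeasurable subalgebra $C \subseteq R$ and consider the restriction $f|_C \colon C \to S$. Since $C$ consists of pairwise commeasurable elements, the definition of a morphism of partial algebras guarantees that $f(C)$ consists of pairwise commeasurable elements of $S$, hence lies in some commeasurable subalgebra $D \subseteq S$, and $f|_C \colon C \to D$ is an ordinary homomorphism of commutative $k$-algebras. Now observe that $f^{-1}(I) \cap C = (f|_C)^{-1}(I \cap D)$: indeed $c \in C$ satisfies $f(c) \in I$ iff $f(c) \in I \cap f(C) \subseteq I \cap D$, and conversely. Since $I \cap D$ is an ideal of the commutative ring $D$ (because $I$ is a partial ideal of $S$) and $f|_C$ is a commutative ring homomorphism, its preimage $(f|_C)^{-1}(I \cap D)$ is an ideal of $C$. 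As $C$ was arbitrary, $f^{-1}(I)$ is a partial ideal of $R$.

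For part (2), suppose additionally that $I$ is prime. First, $f^{-1}(I) \neq R$: since $f(1) = 1 \notin I$ (as $I \neq S$ forces $1 \notin I$ by Lemma~\ref{improper partial ideal}), we have $1 \notin f^{-1}(I)$, so $f^{-1}(I) \subsetneq R$ again by Lemma~\ref{improper partial ideal}. Next, for an arbitrary commeasurable subalgebra $C \subseteq R$, the same computation as above gives $f^{-1}(I) \cap C = (f|_C)^{-1}(I \cap D)$ for a suitable commeasurable subalgebra $D \subseteq S$; since $I \cap D$ is a prime ideal of $D$ and $f|_C$ is a homomorphism of commutative rings, the preimage is a prime ideal of $C$ (the standard fact that preimages of prime ideals are prime). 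Hence $f^{-1}(I) \cap C$ is prime for every commeasurable subalgebra $C$, so $f^{-1}(I)$ is a prime partial ideal of $R$. The final sentence of the lemma is then immediate, since a $k$-algebra homomorphism between full algebras is a morphism of their standard partial algebra structures and (prime) partial ideals of a full algebra are by definition those of its standard partial algebra structure.

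The only subtle point — and the step I would be most careful about — is the identity $f^{-1}(I) \cap C = (f|_C)^{-1}(I \cap D)$ and, more precisely, checking that the choice of the commeasurable subalgebra $D \subseteq S$ containing $f(C)$ is harmless: one must note that $I \cap D$ being an ideal (resp.\ prime ideal) of $D$ does not depend on which such $D$ we pick, which is exactly guaranteed by the hypothesis that $I$ is a partial ideal (resp.\ prime partial ideal) of $S$. Everything else is a routine unwinding of definitions, so I do not anticipate any real obstacle.
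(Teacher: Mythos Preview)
Your proof is correct, but it takes a different route from the paper's. The paper verifies the elementwise definition of (prime) partial ideal directly: for $a \comm b$ in $R$ it checks that $a,b \in f^{-1}(I)$ implies $a+b \in f^{-1}(I)$, that $b \in f^{-1}(I)$ implies $ab \in f^{-1}(I)$, and for primeness that $ab \in f^{-1}(I)$ forces $a$ or $b$ into $f^{-1}(I)$, each step using $f(a) \comm f(b)$ and the corresponding property of $I$ in $S$. You instead invoke the equivalent characterization via commeasurable subalgebras and reduce to the classical fact that preimages of (prime) ideals under commutative ring homomorphisms are (prime) ideals. Both arguments are short; the paper's is marginally more self-contained, while yours makes the reduction to commutative algebra explicit and in fact mirrors the strategy the paper later uses for the $C^*$-algebra analogue in Section~\ref{C* section}. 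One simplification: you need not pass to an auxiliary $D \supseteq f(C)$ at all, since $f(C)$ is itself a commeasurable subalgebra (this is part of the definition of morphism of partial algebras), so you may take $D = f(C)$ and the ``subtle point'' you flag disappears.
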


\begin{proof}
Let $a, b \in R$ be such that $a \comm b$. Then $f(a) \comm f(b)$. If $a, b \in f^{-1}(I)$ then $f(a),f(b) \in I$.
Thus $f(a+b) = f(a) + f(b) \in I$, so that $a + b \in f^{-1}(I)$. On the other hand if $a \in R$ and
$b \in f^{-1}(I)$, then $f(a) \in S$ and $f(b) \in I$. This means that $f(ab) = f(a) f(b) \in I$, whence
$ab \in f^{-1}(I)$. Thus $f^{-1}(I)$ is a partial ideal of $R$.

Now suppose that $I$ is prime. The fact that $I \neq S$ implies that $f^{-1}(I) \neq R$, thanks to
Lemma~\ref{improper partial ideal}. If $a \comm b$ in $R$ are such that $ab \in f^{-1}(I)$, then $f(a) \comm f(b)$
and $f(a) f(b) = f(ab) \in I$. Because $I$ is prime, either $f(a) \in I$ or $f(b) \in I$. In other words,
either $a \in f^{-1}(I)$ or $b \in f^{-1}(I)$. This proves that $f^{-1}(I)$ is prime.
\end{proof}

\begin{definition}
The rule assigning to each ring $R$ the set $\partSpec(R)$ of prime partial ideals of $R$, and to each ring homomorphism
$f \colon R \to S$ the map of sets
\begin{align*}
\partSpec(S) &\to \partSpec(R) \\
P &\mapsto f^{-1}(P),
\end{align*}
is a contravariant functor from the category of rings to the category of sets. We  denote this functor by
$\partSpec \colon \Ring \to \Set$, extending the notation introduced in Definition~\ref{prime partial ideal definition}.
\end{definition}

Notice immediately that the restriction of $\partSpec$ to $\CommRing$ is equal to $\Spec$, and therefore
the functor $\partSpec$ gives an object of the category $\restrict^{-1}(\Spec)$ defined earlier in this
section.
Of course, this functor could be defined on the category of all partial algebras and partial algebra homomorphisms.
But because our primary interest is in the category of rings, we have chosen to restrict our definition to that category.

\begin{example}
\label{domain example}
Recall that an ideal $P \lhd  R$ is \emph{completely prime} if $R/P$ is a domain; that is, $P \neq R$ and
for $a,b \in R$, $ab \in P$ implies that either $a \in P$ or $b \in P$. Certainly every completely prime ideal
of a ring is a prime partial ideal. Thus every domain has a prime partial ideal: its zero ideal.
Recalling Proposition~\ref{division ring partial ideals} we conclude that the zero ideal of a division ring
$D$ is its unique prime partial ideal, so that $\partSpec(D)$ is a singleton.
\end{example}

\separate

The universal property of $\partSpec$ will be established in Theorem~\ref{universal Spec theorem} below.
In preparation, we observe that a partial ideal of a ring is equivalent to a choice of ideal in every
commutative subring. 
For a partial algebra $R$ over a commutative ring $k$, we let $\C_k(R)$ denote the partially ordered set of
all commeasurable subalgebras of $R$.
(In case $R$ is a ring, $\C(R) := \C_{\Z}(R)$ is the poset of commutative subrings of $R$.)
Recall that a subset $\setS$ of a partially ordered set $X$ is \emph{cofinal} if for every $x \in X$
there exists $s \in \setS$ such that $x \leq s$.

\begin{lemma}
\label{data determining partial ideal}
Each of the following data uniquely determines a partial ideal of a partial algebra $R$:
\begin{enumerate}
\item A rule $I$ that associates to each commeasurable subalgebra $C \subseteq R$ an ideal $I(C) \lhd C$ such that,
if $C \subseteq C'$ are commeasurable subalgebras of $R$, then $I(C) = I(C') \cap C$;
\item A rule $I$ that associates to each commeasurable subalgebra $C \subseteq R$ an ideal $I(C) \lhd C$ such that,
if $C_1$ and $C_2$ are commeasurable subalgebras of $R$, then $I(C_1) \cap C_2 = C_1 \cap I(C_2)$;
\item For a cofinal set $\setS$ of commeasurable subalgebras of $R$, a rule $I$ that associates to each $C \in \setS$
an ideal $I(C) \lhd C$ such that, if $C_1$ and $C_2$ are in $\setS$, then $I(C_1) \cap C_2 = C_1 \cap I(C_2)$;
\item A rule $I$ that associates to each maximal commeasurable subalgebra $C \subseteq R$ an ideal $I(C) \lhd C$ such
that, if $C_1$ and $C_2$ are maximal commeasurable subalgebra of $R$, then $I(C_1) \cap C_2 = C_1 \cap I(C_2)$.
\end{enumerate}
\end{lemma}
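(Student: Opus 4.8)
The plan is to use the equivalent description of a partial ideal recalled earlier --- a subset $I \subseteq R$ whose trace $I \cap C$ on every commeasurable subalgebra $C \subseteq R$ is an ideal of $C$ --- and to show that each of the four data types is merely an efficient way of recording the family of traces $C \mapsto I \cap C$, so that in each case the stated data is in bijection with the set of partial ideals of $R$. Two facts noted in the preceding discussion will do most of the work: $R$ is the union of its commeasurable subalgebras, and every $a \in R$ lies in $k[a]$, which is the smallest commeasurable subalgebra containing it.

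I would begin with (1). If $I$ is a partial ideal, the rule $C \mapsto I \cap C$ satisfies the compatibility in (1) trivially. Conversely, given a rule $I$ as in (1), set $\tilde I := \{a \in R : a \in I(k[a])\}$. The key claim is that $\tilde I \cap C = I(C)$ for every commeasurable subalgebra $C$: for $a \in C$ one has $k[a] \subseteq C$, and applying the compatibility hypothesis to $k[a] \subseteq C$ gives $I(k[a]) = I(C) \cap k[a]$, whence (as $a \in k[a]$) $a \in I(k[a]) \iff a \in I(C)$. Consequently every trace $\tilde I \cap C = I(C)$ is an ideal of $C$, so $\tilde I$ is a partial ideal; and any partial ideal $J$ with $J \cap C = I(C)$ for all $C$ satisfies $J = \bigcup_C (J \cap C) = \tilde I$ since $R = \bigcup_C C$. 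This gives the bijection asserted in (1).

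Then (2) reduces to (1): a rule satisfying the hypothesis of (2) also satisfies that of (1) --- take $C_1 = C \subseteq C' = C_2$ and use $I(C) \cap C' = I(C)$ --- while the trace family of a genuine partial ideal satisfies the symmetric identity of (2) because $(I \cap C_1) \cap C_2 = I \cap (C_1 \cap C_2) = (I \cap C_2) \cap C_1$. For (3), I would extend a compatible family on a cofinal set $\setS$ to all commeasurable subalgebras by setting $I(C) := I(S) \cap C$ for any $S \in \setS$ with $C \subseteq S$; independence of the choice of $S$, and the fact that the extension again satisfies the symmetric identity of (2), both fall out by intersecting the identity $I(S_1) \cap S_2 = S_1 \cap I(S_2)$ with the appropriate subalgebra, so (3) follows from (2). (One also notes that a partial ideal is already recovered from its traces on $\setS$ alone, since cofinality forces $R = \bigcup_{S \in \setS} S$.) Finally, (4) is the case of (3) where $\setS$ is the set of maximal commeasurable subalgebras, which is cofinal by a Zorn's lemma argument: the union of a chain of commeasurable subalgebras all containing a fixed one is again such a subalgebra.

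The only inputs that are not pure bookkeeping are the role of $k[a]$ as the smallest commeasurable subalgebra containing a given element, which drives the gluing step in (1), and Zorn's lemma in (4). I expect the delicate part to be not any single computation but keeping the bookkeeping honest: verifying that each of the four constructions is a genuine two-sided inverse of ``take traces,'' and in particular that condition (1) --- which, unlike (2)--(4), is not symmetric in the two subalgebras --- is still strong enough to force all the traces to be compatible.
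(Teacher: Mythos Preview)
Your proposal is correct and follows essentially the same route as the paper. The only cosmetic differences are that the paper builds the partial ideal from a rule of type~(1) as the union $\bigcup_{C} I(C)$ rather than via your pointwise description $\{a : a \in I(k[a])\}$ (these coincide by exactly the $k[a]\subseteq C$ argument you give), and the paper proves $(1)\Rightarrow(2)$ directly by pivoting through $I(C_1\cap C_2)$ rather than passing through the associated partial ideal; it also leaves the equivalence of (2)--(4) and the Zorn step as ``straightforward,'' whereas you spell them out.
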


\begin{proof}
First notice that the rules described in~(1) and~(2) are equivalent. For if $I$ satisfies~(1), then for any
$C_1,C_2 \in \C_k(R)$ we have
\begin{align*}
I(C_1) \cap C_2 &= I(C_1) \cap (C_1 \cap C_2) \\
&= I(C_1 \cap C_2) \\
&= I(C_2) \cap (C_1 \cap C_2) \\
&= I(C_2) \cap C_1.
\end{align*}
Thus $I$ satisfies~(2).
Conversely, if $I$ satisfies~(2) and if $C,C' \in \C(R)$ are such that $C \subseteq C'$, then
\[
I(C) = I(C) \cap C' = C \cap I(C'),
\]
proving that $I$ satisfies~(1).

The equivalence of the rules described in (2)--(4) is straightforward to verify. 
To complete the proof, we show that the data described in~(1) uniquely determines a partial ideal of $R$.
Given a rule $I$ as in~(1), the set $J = \bigcup_{C \in \C_k(R)} I(C) \subseteq R$ is certainly a partial ideal
of $R$. Conversely, given a partial ideal $J$ of $R$, the assignment $I$ sending $C \mapsto I(C) := J \cap C$
satisfies~(1). Clearly these maps $I \mapsto J$ and $J \mapsto I$ are mutually inverse.
\end{proof}

A choice of a prime ideal in each commutative subring of a ring $R$ can be viewed as an element of the
product $\prod_{C \in \C(R)} \Spec(C)$.
The above characterization~(1) of partial ideals says that the prime partial ideals can be identified
with those elements $(P_C)_{C \in \C(R)}$ of this product such that for every $C,C' \in \C(R)$
with $C \subseteq C'$, one has $P_{C'} \cap C = P_C$. This fact is used below.

The last step before the main result of this section is to give an alternative description
of $\partSpec$ as a certain limit.
We recall the ``product-equalizer'' construction of limits in the category of sets
(see~\cite[V.2]{MacLane}).
Let $D \colon J \to \Set$ be a diagram (i.e., $D$ is a functor and $J$ is a small category).
Then the limit of $D$ can be formed explicitly as
\[
\invlim_J D = \left\{ \left. (x_j) \in \prod_{j \in \Obj(J)}D(j) \, \right| \,
\text{$D(f)(x_i) = x_j$ for all $i,j \in \Obj(J)$ and all $f \colon i \to j$ in $J$} \right\},
\]
with the morphisms $\invlim D \to D(j)$ defined for each $j \in \Obj(J)$ via projection.

For a ring $R$, we view the partially ordered set $\C(R)$ defined above as a category
by considering each element of $\C(R)$ as an object and each inclusion as a morphism.
(The appropriate analogue of this category in the context of $C^*$-algebras
makes a key appearance in the definition of the Bohrification
functor~\cite[Def.~4]{HeunenLandsmanSpitters} of Heunen, Landsman, and Spitters.)
The functor that is shown to be isomorphic to $\partSpec$ in the following
proposition is very close to one defined by van den Berg and Heunen
in~\cite[Prop.~5]{BergHeunen1}.

\begin{proposition}
\label{isomorphic functors}
The contravariant functor $\partSpec \colon \Ring \to \Set$ is isomorphic to the functor
defined, for a given ring $R$, by
\[
R \mapsto \invlim_{C \in \C(R)^{\op}} \Spec(C).
\]
This isomorphism preserves the isomorphism of functors $\partSpec|_{\CommRing} \cong \Spec$.
\end{proposition}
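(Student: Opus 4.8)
The plan is to produce the isomorphism object-by-object out of Lemma~\ref{data determining partial ideal}, and then to verify naturality by unwinding the definition of each functor on morphisms. Throughout, write $G(R) := \invlim_{C \in \C(R)^{\op}} \Spec(C)$; since $\C(R)$ is a genuine set this is a limit of a small diagram in $\Set$, so it exists. Unpacking the product-equalizer formula recalled above, a morphism $C \to C'$ in $\C(R)^{\op}$ is an inclusion $C' \subseteq C$ of commutative subrings of $R$, and $\Spec$ sends it to the contraction map $\Spec(C) \to \Spec(C')$, $Q \mapsto Q \cap C'$; hence an element of $G(R)$ is precisely a family $(P_C)_{C \in \C(R)}$ with each $P_C \in \Spec(C)$ and $P_C \cap C' = P_{C'}$ whenever $C' \subseteq C$. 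By characterization~(1) of Lemma~\ref{data determining partial ideal} together with Definition~\ref{prime partial ideal definition}, the assignment $P \mapsto (P \cap C)_{C \in \C(R)}$ is a bijection $\eta_R \colon \partSpec(R) \overset{\sim}{\to} G(R)$, with inverse $(P_C)_C \mapsto \bigcup_C P_C$.

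Next I would pin down how the target functor acts on a ring homomorphism $f \colon R \to S$. For each $C \in \C(R)$ the restriction of $f$ is a homomorphism $f|_C \colon C \to f(C)$ onto the commutative subring $f(C) \in \C(S)$, and I would define $G(f) \colon G(S) \to G(R)$ by $(P_{C'})_{C' \in \C(S)} \mapsto \bigl((f|_C)^{-1}(P_{f(C)})\bigr)_{C \in \C(R)}$. Two things must be checked: that the right-hand family again lies in $G(R)$, i.e. $(f|_{C_2})^{-1}(P_{f(C_2)}) \cap C_1 = (f|_{C_1})^{-1}(P_{f(C_1)})$ for $C_1 \subseteq C_2$ in $\C(R)$ — which follows from $f(C_1) \subseteq f(C_2)$ together with the compatibility $P_{f(C_1)} = P_{f(C_2)} \cap f(C_1)$ of the given family — and that $G$ so defined respects identities and composites, which reduces to the identity $(g \circ f)|_C = (g|_{f(C)}) \circ (f|_C)$. (Equivalently one can describe $G(f)$ cleanly as pulling the limiting cone back along the monotone map $C \mapsto f(C)$, $\C(R)^{\op} \to \C(S)^{\op}$, and then applying the natural transformation with components $\Spec(f|_C) \colon \Spec(f(C)) \to \Spec(C)$; but the explicit formula is what is convenient below.) Moreover $G$ restricted to $\CommRing$ recovers $\Spec$ up to canonical isomorphism: $R$ is the largest element of $\C(R)$, so a compatible family $(P_C)$ is determined by $P_R$ via $P_C = P_R \cap C$, whence $G(R)$ is canonically identified with $\Spec(R)$ by the projection to the $R$-component, and under this identification $\eta_R$ becomes $P \mapsto (P \cap C)_C$, whose $R$-component is $P$.

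Finally I would verify that $\eta = (\eta_R)_R$ is natural. Fix $f \colon R \to S$ and $P \in \partSpec(S)$. Going one way around the square, $\eta_R(\partSpec(f)(P)) = \eta_R(f^{-1}(P)) = (f^{-1}(P) \cap C)_{C \in \C(R)}$ (using Lemma~\ref{preimage lemma} to see $f^{-1}(P) \in \partSpec(R)$). Going the other way, $\eta_S(P) = (P \cap C')_{C' \in \C(S)}$, and applying $G(f)$ yields the family whose $C$-component is $(f|_C)^{-1}(P \cap f(C)) = \{x \in C : f(x) \in P \cap f(C)\} = \{x \in C : f(x) \in P\} = f^{-1}(P) \cap C$, using that $f(x) \in f(C)$ automatically. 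The two families agree, so the square commutes; hence $\eta$ is an isomorphism of functors, and by the computation in the previous paragraph it is compatible with the identification $\partSpec|_{\CommRing} = \Spec$ and the canonical identification $G|_{\CommRing} \cong \Spec$. I do not expect a serious obstacle: the only real care needed is bookkeeping the variance of $\C(R)$ versus $\C(R)^{\op}$ and the directions of the contraction maps, plus the well-definedness check for $G$ on morphisms; everything else is a short diagram chase.
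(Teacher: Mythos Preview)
Your argument is correct and follows the same route as the paper: identify elements of the limit with compatible families $(P_C)_{C \in \C(R)}$ via the product--equalizer description, then invoke Lemma~\ref{data determining partial ideal} to match these with prime partial ideals. The paper's own proof stops there, asserting without detail that ``these isomorphisms are natural in $R$''; you have gone further and actually written down the action of the limit functor on a morphism $f$ and checked the naturality square, which is a genuine service since the paper leaves this implicit.
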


\begin{proof}
For any ring $R$, we have the following isomorphisms of sets:
\begin{align*}
\invlim\nolimits_{C \in \C(R)^{\op}} \Spec(C) 
&= \left\{ \left. (P_C) \in \prod_{C \in \C(R)} \Spec(C) \, \right| \,
\begin{gathered}
\text{for all inclusions } i \colon C \hookrightarrow C',\\ \Spec(i)(P_{C'}) = P_{C}
\end{gathered}
\right\} \\
&= \left\{ \left. (P_C) \in \prod_{C \in \C(R)} \Spec(C) \, \right| \,
\begin{gathered}
\text{for all inclusions } C \subseteq C', \\ \ P_{C'} \cap C = P_{C} 
\end{gathered}
\right\} \\
&\cong \partSpec(R),
\end{align*}
where the last isomorphism comes from Lemma~\ref{data determining partial ideal} (and the discussion
that followed).
These isomorphisms are natural in $R$ and thus provide an isomorphism of functors.
\end{proof}

We will now show that $\partSpec$ is our desired ``universal $\Spec$ functor.''
In fact, we prove a stronger result stating that it is universal among all contravariant
functors $\Ring \to \Set$ whose restriction to $\CommRing$ has a natural transformation
to $\Spec$ that is not necessarily an isomorphism. This is made precise below.

Given functors $K \colon \cat{A} \to \cat{B}$ and $S \colon \cat{A} \to \cat{C}$,
we recall that the \emph{(right) Kan extension of $S$ along $K$} is a functor
$R \colon \cat{B} \to \cat{C}$ along with a natural transformation $\varepsilon \colon RK \to S$
such that for any other functor $F \colon \cat{B} \to \cat{C}$ with a natural transformation
$\eta \colon FK \to S$ there is a unique natural transformation $\delta \colon F \to R$
such that $\eta = \epsilon \circ (\delta K)$.
(The ``composite'' $\delta K \colon FK \to RK$ of a functor with a natural transformation
is a common shorthand for the \emph{horizontal composite} $\delta \circ \mathbf{1}_{K}$
of the identity natural transformation $\mathbf{1}_{K} \colon K \to K$ with $\delta$,
so that $\delta K(X) = \delta(K(X)) \colon FK \to RK$ for any $X \in \cat{A}$;
see~\cite[II.5]{MacLane} for information on horizontal composition.)
When $K \colon \cat{A} \to \cat{B}$ is an inclusion of a subcategory
$\cat{A} \subseteq \cat{B}$, notice that $FK = F|_{\cat{A}}$ is the restriction.
In this case, the natural transformation $\delta K \colon FK \to RK$ is the induced
natural transformation of the restricted functors
$\delta|_{\cat{A}} \colon F|_{\cat{A}} \to R|_{\cat{A}}$.

\begin{theorem}
\label{universal Spec theorem}
The functor $\partSpec \colon \Ringop \to \Set$, along with the identity natural
transformation $\partSpec|_{\CommRingop} \to \Spec$, is the Kan extension of the functor
$\Spec \colon \CommRingop \to \Set$ along the embedding $\CommRingop \subseteq \Ringop$.
In particular, $\partSpec$ is a terminal object in the category $\restrict^{-1}(\Spec)$.
\end{theorem}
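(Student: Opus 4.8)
The plan is to verify the universal property of the right Kan extension directly, using the explicit description of $\partSpec(R) = \invlim_{C \in \C(R)^{\op}} \Spec(C)$ furnished by Proposition~\ref{isomorphic functors}. Write $K \colon \CommRingop \hookrightarrow \Ringop$ for the embedding and $S = \Spec \colon \CommRingop \to \Set$. The counit $\varepsilon \colon \partSpec \circ K \to S$ is the identity, since $\partSpec|_{\CommRingop} = \Spec$ by the remark following Definition~\ref{prime partial ideal definition}. Given any functor $F \colon \Ringop \to \Set$ equipped with a natural transformation $\eta \colon F|_{\CommRingop} \to \Spec$, I must produce a unique natural transformation $\delta \colon F \to \partSpec$ with $\eta = \varepsilon \circ (\delta|_{\CommRingop}) = \delta|_{\CommRingop}$; that last equation forces $\delta_C = \eta_C$ for every commutative ring $C$, so the content is to show that these components extend uniquely to a natural transformation on all of $\Ringop$.

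First I would construct $\delta$. Fix a ring $R$. For each commutative subring $C \in \C(R)$, the inclusion $\iota_C \colon C \hookrightarrow R$ gives $F(\iota_C) \colon F(R) \to F(C)$, and composing with $\eta_C \colon F(C) \to \Spec(C)$ yields a map $F(R) \to \Spec(C)$. I claim these assemble into a map $F(R) \to \invlim_{C \in \C(R)^{\op}} \Spec(C) = \partSpec(R)$: for an inclusion $C \subseteq C'$ of commutative subrings of $R$, naturality of $\eta$ applied to $C \hookrightarrow C'$ together with functoriality of $F$ (using $\iota_C = \iota_{C'} \circ (C \hookrightarrow C')$) shows that the composite $F(R) \to \Spec(C')$, followed by the restriction $\Spec(C') \to \Spec(C)$, equals $F(R) \to \Spec(C)$, which is exactly the compatibility required to land in the limit. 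Define $\delta_R \colon F(R) \to \partSpec(R)$ to be this induced map. Naturality of $\delta$ in $R$: given a ring homomorphism $g \colon R \to R'$, one checks that for each $C \in \C(R)$ the two ways around the relevant square agree, using that $g$ maps $C$ into some commutative subring of $R'$ and invoking naturality of $\eta$ on the commutative piece; since a map into a limit is determined by its components, the square for $\delta$ commutes. Finally, when $R$ is itself commutative, $R \in \C(R)$ is the top element, the limit projection at $C = R$ is an isomorphism onto $\Spec(R)$, and one sees $\delta_R = \eta_R$, so $\delta|_{\CommRingop} = \eta$ as required.

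For uniqueness, suppose $\delta' \colon F \to \partSpec$ also satisfies $\delta'|_{\CommRingop} = \eta$. For any ring $R$ and any $C \in \C(R)$, naturality of $\delta'$ on the inclusion $\iota_C \colon C \hookrightarrow R$ gives a commuting square relating $\delta'_R$, $\delta'_C = \eta_C$, $F(\iota_C)$, and $\partSpec(\iota_C)$; composing with the limit projection $\partSpec(C) \to \Spec(C)$ (which is the identity, as $C$ is commutative) shows the $C$-component of $\delta'_R$ equals $\eta_C \circ F(\iota_C)$, which is precisely the $C$-component of $\delta_R$. Since these hold for all $C \in \C(R)$ and a map into a limit is determined by its components, $\delta'_R = \delta_R$. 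The final sentence of the theorem follows immediately: applying the universal property to objects $(F, \phi)$ of $\restrict^{-1}(\Spec)$, where $\phi$ is an isomorphism, the induced $\delta \colon F \to \partSpec$ is a morphism in $\restrict^{-1}(\Spec)$, and its uniqueness (even among non-iso natural transformations) makes it the unique such morphism, so $\partSpec$ is terminal.

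**Main obstacle.** The routine-looking diagram chases all hinge on one genuinely substantive point: identifying $\partSpec(R)$ with the limit and, crucially, recognizing that a morphism of sets into $\invlim_{C} \Spec(C)$ is uniquely determined by its family of projections. That is what converts the compatibility condition in the definition of the limit into both the existence of $\delta_R$ and the forced equality $\delta'_R = \delta_R$. The subtle bookkeeping is making sure that the indexing category $\C(R)^{\op}$ interacts correctly with contravariance — i.e., that inclusions $C \subseteq C'$ in $\C(R)$ become the morphisms over which the limit is taken, and that $F(\iota_C)$ for the inclusion into $R$ (not a morphism of the diagram!) is what feeds the projections. Once that is set up cleanly via Proposition~\ref{isomorphic functors}, everything else is a matter of unwinding naturality.
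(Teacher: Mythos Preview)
Your proof is correct and follows essentially the same approach as the paper: both use the identification $\partSpec(R) \cong \invlim_{C \in \C(R)^{\op}} \Spec(C)$ from Proposition~\ref{isomorphic functors}, build a cone $F(R) \to \Spec(C)$ from the maps $\eta_C \circ F(\iota_C)$, and invoke the universal property of the limit to obtain $\delta_R$. Your treatment of uniqueness is in fact a bit more explicit than the paper's, which simply cites uniqueness of the induced map into the limit; you correctly spell out why any competing $\delta'$ must have the same limit components, via naturality on the inclusions $\iota_C$.
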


\begin{proof}
Let $F \colon \Ring \to \Set$ be a contravariant functor with a fixed natural
transformation $\eta \colon F|_{\CommRing} \to \Spec$. We need to show that there
is a unique natural transformation $\delta \colon F \to \partSpec$ that induces
$\eta$ upon restriction to $\CommRing \subseteq \Ring$. To construct $\delta$,
fix a ring $R$. 
For every commutative subring $C$ of $R$, the inclusion $C \subseteq R$
gives a morphism of sets $F(R) \to F(C)$ and $\eta$ provides a morphism
$\eta_C \colon F(C) \to \Spec(C)$; these compose to give morphisms $F(R) \to \Spec(C)$.
By naturality of the morphisms involved, these maps out of $F(R)$ collectively
form a cone over the diagram obtained by applying $\Spec$ to the diagram
$\C(R)$ of commutative subrings of $R$.
By the universal property of the limit, there exists a unique dotted
arrow making the square below commute for all $C \in \C(R)$:
\[
\begin{tikzpicture}
  \path 
    (0,0) node (FR) {$F(R)$} +(3.5,0) node (limSpec) {$\invlim\limits_{C \in \C(R)} \Spec(C)$} + (7,0) node(pSpecR) {$\partSpec(R)$}
      +(0,-2) node (FC) {$F(C)$} +(3.5,-2) node (SpecC) {$\Spec(C)$.}
  ;
  \draw[dashed,->] (FR) -- (limSpec);
  \draw[->] (FR) -- (FC);
  \draw[->] (limSpec) -- (SpecC);  
  \draw[->] (FC) edge node[above]{$\eta_C$} (SpecC);
%  \draw (limSpec) edge[double distance=2pt] (pSpecR);
  \draw[->] (limSpec) edge node[above=-3pt]{$\sim$} (pSpecR);
  \draw[->] (FR) edge[bend left=20] node[above]{$\delta_R$} (pSpecR);
\end{tikzpicture}
\]
These morphisms $\delta_R$ form the components of a natural transformation
$\delta \colon F \to \partSpec$. By construction, $\delta$ induces $\eta$
when restricted to $\CommRing$. Uniqueness of $\delta$ is guaranteed by the
uniqueness of dotted arrow used to define $\delta_R$ above.

The second sentence of the theorem follows from the first by applying the universal
property of the Kan extension in the special case where $F \colon \Ringop \to \Set$
is a functor with a natural transformation $\eta \colon F|_{\CommRingop} \to \Spec$
that is an isomorphism.
\end{proof}

\section{Morphisms of partial algebras and the Kochen-Specker Theorem}
\label{Kochen-Specker section}

Having defined our universal functor $\partSpec$ extending $\Spec$, we must
now determine its value on the algebra $\M_n(\Complex)$.
The first result of this section establishes a connection between the
partial prime ideals of this algebra and certain morphisms of partial
algebras.

We recall a relevant fact from commutative algebra. Let $C$ be a finite dimensional
commutative algebra over an algebraically closed field $k$. Such an algebra
is artinian, so all of its prime ideals are maximal. Given a maximal ideal
$\mathfrak{m} \subseteq C$, the factor $k$-algebra $C/\mathfrak{m}$ is a finite dimensional field
extension of the algebraically closed field $k$ and thus is is isomorphic to $k$.
Hence $\Spec(C)$ is in bijection with the set of $k$-algebra homomorphisms $C \to k$.
This situation is generalized below.

\begin{proposition}
\label{finite dimensional partial Spec}
Let $R$ be partial algebra over an algebraically closed field $k$ such that every element of $R$
is algebraic over $k$ (e.g., $R$ is a finite dimensional $k$-algebra).
Then there is a bijection between the set $\partSpec(R)$ and the set of all morphisms of partial
$k$-algebras $f \colon R \to k$, which associates to each such morphism $f$ the inverse image
$f^{-1}(0)$.
\end{proposition}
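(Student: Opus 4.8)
The plan is to reduce the statement to the commutative fact recalled just above by working componentwise over the commeasurable subalgebras of $R$. First I would set up the map in the reverse direction: given a morphism of partial $k$-algebras $f \colon R \to k$, Lemma~\ref{preimage lemma} (applied with $S = k$, whose only prime partial ideal is $0$ by Proposition~\ref{division ring partial ideals} — or rather by the discussion of domains in Example~\ref{domain example}) shows that $f^{-1}(0)$ is a prime partial ideal of $R$. So $f \mapsto f^{-1}(0)$ is a well-defined map from partial-algebra morphisms $R \to k$ into $\partSpec(R)$, and it remains to show this map is a bijection.

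For surjectivity, I would start with a prime partial ideal $P \in \partSpec(R)$ and build a morphism $f \colon R \to k$ with $f^{-1}(0) = P$. The idea is to use the characterization of partial ideals from Lemma~\ref{data determining partial ideal}: $P$ amounts to a compatible choice of prime ideal $P \cap C \lhd C$ for each commeasurable subalgebra $C \subseteq R$. Each such $C$ is generated by algebraic elements over the algebraically closed $k$, hence is an integral (indeed algebraic) extension inside which every prime is maximal, and $C/(P\cap C) \cong k$; this gives for each $C$ a $k$-algebra homomorphism $f_C \colon C \to k$ with kernel $P \cap C$, and $f_C$ is \emph{uniquely} determined by $P \cap C$ because $C \to k$ is surjective with that prescribed kernel and $k$ has no nontrivial $k$-algebra endomorphisms. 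The compatibility condition $P_{C'} \cap C = P_C$ for $C \subseteq C'$ forces $f_{C'}|_C = f_C$ (both are $k$-algebra maps $C \to k$ with the same kernel), so the $f_C$ glue to a single function $f \colon R \to k$ — here I use that $R$ is the union of its commeasurable subalgebras, and that any commeasurable pair of elements lies in a common $C$, so $f$ is well-defined and respects the partial operations and the commeasurability relation (trivially, since the target is commutative). Then $f^{-1}(0) = \bigcup_C (P \cap C) = P$.

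For injectivity, suppose $f, g \colon R \to k$ are partial-algebra morphisms with $f^{-1}(0) = g^{-1}(0) = P$. Restricting to any commeasurable subalgebra $C$, both $f|_C$ and $g|_C$ are $k$-algebra homomorphisms $C \to k$ with the same kernel $P \cap C$, hence equal (again by uniqueness of the quotient map $C \to C/(P\cap C) \cong k$ up to the trivial automorphism of $k$). Since every element of $R$ lies in some such $C$, we get $f = g$.

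\textbf{Main obstacle.} The genuinely substantive point is the gluing step in surjectivity: one must check that the locally-defined homomorphisms $f_C$ are consistent not just on nested pairs but on the whole of $R$, i.e.\ that if $a \in C_1 \cap C_2$ then $f_{C_1}(a) = f_{C_2}(a)$. This follows by passing to a commeasurable subalgebra $C$ containing $C_1 \cup C_2$ — but such a $C$ need not exist in general, since $C_1$ and $C_2$ together need not be pairwise commeasurable. The clean fix is to observe that a single element $a \in C_1 \cap C_2$ generates $k[a]$, which is contained in both $C_1$ and $C_2$; by the nesting compatibility already established, $f_{C_1}|_{k[a]} = f_{k[a]} = f_{C_2}|_{k[a]}$, so $f_{C_1}(a) = f_{C_2}(a)$. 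Thus $f$ is well-defined on all of $R$, and the rest is routine verification that $f$ is a partial-algebra morphism. I expect the write-up to lean on Lemma~\ref{data determining partial ideal} to keep this bookkeeping short.
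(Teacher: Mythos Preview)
Your proposal is correct and follows essentially the same strategy as the paper: use Lemma~\ref{preimage lemma} for one direction, and for the other glue the canonical maps $C \twoheadrightarrow C/(P\cap C)\cong k$ along inclusions of commeasurable subalgebras, resolving well-definedness via $k[a]\subseteq C_1\cap C_2$. The only cosmetic difference is that the paper restricts throughout to \emph{finite-dimensional} commeasurable subalgebras (so ``primes are maximal'' is immediate from the Artinian condition), whereas you work with arbitrary commeasurable subalgebras and appeal to integrality over $k$; both variants are fine.
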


\begin{proof}
Because $R$ consists of algebraic elements, every element of $R$ generates a finite dimensional
commeasurable subalgebra. In other words, $R$ is the union of its finite dimensional commeasurable
subalgebras.

Given a morphism $f \colon R \to k$ of partial $k$-algebras, the set $P_f := f^{-1}(0) \subseteq R$
is a prime partial ideal of $R$ according to Lemma~\ref{preimage lemma}. 
Furthermore, for each finite dimensional commeasurable subalgebra $C \subseteq R$, the prime ideal
$C \cap P_f \lhd C$ is maximal.  Thus the restriction $f|_C$ must be equal to the canonical
homomorphism $C \twoheadrightarrow C/(P_f \cap C) \overset{\sim}{\longrightarrow} k$. 

Conversely, suppose that $P \subseteq R$ is a prime partial ideal. We define a function $f \colon R \to k$
as follows. As before, for each finite dimensional commeasurable subalgebra $C \subseteq R$ the
prime ideal $P \cap C$ of $C$ is a maximal ideal. Thus we may define $g_C \colon C \to k$ via
the quotient map $C \twoheadrightarrow C/(P \cap C) \overset{\sim}{\longrightarrow} k$.
Notice that for finite dimensional commeasurable subalgebras $C \subseteq C'$, the following
diagram commutes:
\[
\xymatrix{
C \ar@{^{(}->}[d] \ar@{->>}[r] & C/(P \cap C) \ar[d] \ar[r]^-{\sim} & k \ar@{=}[d] \\
C' \ar@{->>}[r] & C'/(P \cap C') \ar[r]^-{\sim} & k.
}
\]
Thus there is a well-defined function $f_P \colon R \to k$ given, for any $r \in R$, by
$f_P(r) = g_C(r)$ for any finite dimensional commeasurable subalgebra $C$ of $R$ containing $r$
(such as $C = k[r] \subseteq R$).
It is clear from the construction of $f_P$ that $f_P^{-1}(0) = P$.

We have defined maps $P \mapsto f_P$ and $f \mapsto P_f$. The last sentences of the previous
two paragraphs show that these assignments are mutually inverse, completing the proof.
\end{proof}

\separate

Thus the proof of Theorem~\ref{main theorem} is reduced to understanding the morphisms of
partial $\Complex$-algebras $\M_n(\Complex) \to \Complex$.
The \emph{Kochen-Specker Theorem} provides just the information that we need.
This theorem, due to S.~Kochen and E.~Specker~\cite{KochenSpecker}, is a ``no-go theorem'' from quantum
mechanics that rules out the existence of certain types of hidden variable theories.
Probability is an inherent feature in the mathematical formulation of quantum physics; only the evolution
of the probability amplitude of a system is computed.
A hidden variable theory is, roughly speaking, a theory devised to explain quantum mechanics by
predicting outcomes of all measurements \emph{with certainty}.

The observable quantities of a quantum system are mathematically represented by self-adjoint operators in a
$C^*$-algebra. The Heisenberg Uncertainty Principle implies that if two such operators do not commute,
then the exact values of the corresponding observables cannot be simultaneously determined. On the other
hand, commuting observables have no uncertainty restriction imposed upon them by Heisenberg's principle.
In ~\cite{KochenSpecker} Kochen and Specker argued that a hidden variable theory should assign a real value
to each observable of a quantum system in such a way that values of the sum or product of commuting
observables is equal to the sum or product of their corresponding values. That is to say, Kochen and Specker's
notion of a hidden variable theory is a morphism of partial $\mathbb{R}$-algebras from the partial algebra
of observables to $\mathbb{R}$. With this motivation, Kochen and Specker showed that no such morphism
exists.

\begin{KochenSpeckerTheorem}
\label{Kochen-Specker theorem}
Let $n \geq 3$, and for $A := \M_n(\mathbb{C})$ let $A_{sa} \subseteq A$ denote the subset of
self-adjoint elements of $A$.
There does not exist a morphism of partial $\mathbb{R}$-algebras $f \colon A_{sa} \to \mathbb{R}$.
\end{KochenSpeckerTheorem}

Actually,~\cite{KochenSpecker} establishes this result for $n=3$, but it is often cited in the literature
for $n \geq 3$. Because the reduction to the case $n=3$ is straightforward, we include it below.

\begin{proof}[Proof for $n>3$]
We assume that the result holds for $n=3$, as proved in~\cite{KochenSpecker}. Let $n>3$, and assume
for contradiction that there is a morphism of partial algebras $f \colon A_{sa} \to \mathbb{R}$. Let
$P_i = E_{ii} \in A_{sa}$ be the orthogonal projection onto the $i$th basis vector. Then $\sum P_i = I$ and
$P_i P_j = \delta_{ij} P_i$. In particular, because $f$ is a morphism of partial algebras we have
$\sum f(P_i) = f(\sum P_i) = 1$.  Furthermore, each $f(P_i) = f(P_i^2) = f(P_i)^2$ must equal either~$0$
or~$1$. So the values $f(P_i)$ are all equal to $0$, except for one $P_j$ with $f(P_j) = 1$. 

Choose two of the other projections $P_i$ to get a set of three distinct projections $P_j$, $P_k$, and $P_{\ell}$.
Then $E := P_j + P_k + P_{\ell}$ is an orthogonal projection, so there is an isomorphism of the corner algebra
$EAE \cong \M_3(\mathbb{C})$ that preserves self-adjoint elements. Now the restriction of $f$ to
$(EAE)_{sa} = EAE \cap A_{sa}$ satisfies all properties of a morphism of partial $\mathbb{R}$-algebras,
except possibly the preservation of the multiplicative identity. But the multiplicative identity of $(EAE)_{sa}$
is $E$ and $f(E) = f(P_j) + f(P_k) + f(P_{\ell}) = 1$, proving that $f$ is a morphism of partial algebras. This
contradicts the Kochen-Specker Theorem in dimension~3.
\end{proof}

\smallseparate

In Corollary~\ref{Kochen-Specker corollary} below, we will establish an analogue of the Kochen-Specker
Theorem that is more suitable for our purposes.
First we require one preparatory result.  Given an element $x$ of a partial ring $R$, we will say that
another element $y \in R$ is \emph{an inverse of $x$} if $x \perp y$ and $xy = 1$.
(Such an element need not be unique! An example of an element with two inverses is easily
constructed by taking two copies of a Laurent polynomial ring $k[x_1, x_1^{-1}]$,
$k[x_2, x_2^{-1}]$, ``gluing'' them by identifying $k[x_1]$ with $k[x_2]$, and declaring
$x_i^{-1} \comm x_1 = x_2$ but with the $x_i^{-1}$ not commeasurable to one another.
An inverse $y$ of $x$ is unique if $y$ is commeasurable to all elements of $R$ that are commeasurable
to $x$. We thank George Bergman for these observations.)
The following argument is a standard one. It basically appeared
in~\cite[pp.81--82]{KochenSpecker}, and it even has roots in the theory of the Gelfand spectrum
of $C^*$-algebras.

\begin{lemma}
\label{eigenvalue lemma}
Let $R$ be a partial algebra over a commutative ring $k \neq 0$, and let $f \colon R \to k$ be a
morphism of partial $k$-algebras.
Then for any $r \in R$, the element $r-f(r) \in R$ does not have an inverse.
In particular, if $k$ is a field and $R = \M_n(k)$, then $f(r) \in k$ is an eigenvalue of $r$.
\end{lemma}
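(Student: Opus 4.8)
The statement has two parts. For the first, I want to show that if $r - f(r)$ had an inverse $y$, then applying $f$ to the relation $(r - f(r)) y = 1$ would force $0 = 1$ in $k$. First I would observe that $f(r - f(r)) = f(r) - f(f(r) \cdot 1) = f(r) - f(r) f(1) = f(r) - f(r) = 0$, using that $f$ is $k$-linear (so $f(\lambda) = \lambda f(1) = \lambda$ for scalars $\lambda \in k$, viewing $k \hookrightarrow R$) and unital. Now suppose $y \in R$ is an inverse of $x := r - f(r)$, meaning $x \comm y$ and $xy = 1$. Since $x \comm y$, the morphism property gives $1 = f(1) = f(xy) = f(x) f(y) = 0 \cdot f(y) = 0$. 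This contradicts $k \neq 0$, so no such $y$ exists.

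For the second part, take $k$ a field and $R = \M_n(k)$ with its standard partial algebra structure (commeasurability being commutativity), and let $r \in \M_n(k)$. I want to conclude $f(r)$ is an eigenvalue of $r$. Set $\lambda := f(r) \in k$; by the first part, $r - \lambda$ (here $r - \lambda I$, i.e. $r - f(r) \cdot 1$) does not have an inverse in the partial algebra sense. The key point is that in $\M_n(k)$, any two-sided inverse of an element is automatically a polynomial in that element — indeed, if $M \in \M_n(k)$ is invertible, then $M^{-1} \in k[M]$ by Cayley--Hamilton (the constant term of the characteristic polynomial is $\pm \det M \neq 0$, so $M^{-1}$ is expressed via the other coefficients), hence $M^{-1}$ commutes with $M$, i.e. $M^{-1} \comm M$. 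So if $r - \lambda I$ were invertible as an ordinary matrix, its matrix inverse would be commeasurable with it and would be an inverse in the partial sense, contradicting the first part. Therefore $r - \lambda I$ is not invertible as a matrix, which is exactly the statement that $\det(r - \lambda I) = 0$, i.e. $\lambda = f(r)$ is an eigenvalue of $r$.

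**Main obstacle.** The only real subtlety is the bridge between ``inverse in the partial algebra'' and ``inverse as a matrix'': a priori an element could fail to have a partial-algebra inverse while still being an invertible matrix (if its matrix inverse didn't commute with it — but of course two-sided matrix inverses always commute, so this can't happen). I would make sure to state cleanly that for $M \in \M_n(k)$, having a two-sided inverse in the ring $\M_n(k)$ is equivalent to $\det M \neq 0$, and that in that case $M^{-1}$ commutes with $M$ (so it witnesses a partial-algebra inverse). One could invoke Cayley--Hamilton for the commuting claim, or simply note directly that a two-sided inverse of any ring element commutes with it. With that in hand the argument is short; I expect no genuine difficulty beyond being careful about the identification of scalars $\lambda \in k$ with $\lambda \cdot 1 \in R$ and the fact that $f$ fixes them.
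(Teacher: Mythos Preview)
Your proposal is correct and essentially identical to the paper's proof: both compute $f(r-f(r))=0$ and then apply $f$ to the relation $(r-f(r))u=1$ to obtain $1=0$. The paper leaves the ``in particular'' clause implicit, whereas you spell out that a matrix inverse commutes with the matrix (so a ring inverse in $\M_n(k)$ is automatically a partial-algebra inverse); this extra detail is fine and the argument is sound.
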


\begin{proof}
If $r-f(r)$ has an inverse $u \in R$, then $k=0$ by the following equation:
\begin{align*}
1 &= f(1) \\
&= f((r - f(r))u) \\
&= f(r-f(r)) f(u) \\
&= (f(r) - f(f(r) \cdot 1)) f(u) \\
&= (f(r) - f(r)) f(u) \\
&= 0. \qedhere
\end{align*}
\end{proof}

We now have the following reformulation of the Kochen-Specker Theorem that is more
appropriate to our needs. (One could think of it as a ``complex-valued,'' rather than
``real-valued,'' Kochen-Specker Theorem.)
Together with Proposition~\ref{finite dimensional partial Spec}, this constitutes the
final ``key result'' used in the proof of Theorem~\ref{main theorem}.

\begin{corollary}
\label{Kochen-Specker corollary}
For any $n \geq 3$, there is no morphism of partial $\mathbb{C}$-algebras $\M_n(\mathbb{C}) \to \mathbb{C}$.
\end{corollary}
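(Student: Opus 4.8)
The strategy is to reduce the complex statement to the classical (real-valued) Kochen--Specker Theorem by restricting a hypothetical morphism $\M_n(\Complex) \to \Complex$ to the self-adjoint part. Suppose for contradiction that $f \colon \M_n(\Complex) \to \Complex$ is a morphism of partial $\Complex$-algebras. The idea is to show that $f$ carries self-adjoint elements to \emph{real} numbers, so that $f|_{A_{sa}} \colon A_{sa} \to \R$ is a morphism of partial $\R$-algebras, directly contradicting the Kochen--Specker Theorem stated above for $n \geq 3$.

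\textbf{Key steps.} First I would check that $f$ restricted to $A_{sa} = \M_n(\Complex)_{sa}$ is well-defined as a map into $\R$; this is the crux. For $r \in A_{sa}$, Lemma~\ref{eigenvalue lemma} (applied with $k = \Complex$, $R = \M_n(\Complex)$) tells us that $f(r)$ is an eigenvalue of $r$. But a self-adjoint matrix has only real eigenvalues, so $f(r) \in \R$. Hence $f(A_{sa}) \subseteq \R$. Second, I would verify that $f|_{A_{sa}}$ satisfies the axioms of a morphism of partial $\R$-algebras: since $A_{sa}$ is closed under the partial operations (sums and products of commuting self-adjoint elements are self-adjoint, and real scalar multiples of self-adjoint elements are self-adjoint, as noted in the $C^*$-algebra example above), and since $f$ preserves commeasurability, addition, multiplication, real scalar multiplication, and the elements $0$ and $1$, the restriction $f|_{A_{sa}} \colon A_{sa} \to \R$ inherits all these properties. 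Third, this contradicts the Kochen--Specker Theorem, which asserts precisely that no such morphism $A_{sa} \to \R$ exists for $n \geq 3$. Therefore no morphism of partial $\Complex$-algebras $\M_n(\Complex) \to \Complex$ exists.

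\textbf{Main obstacle.} The only genuinely substantive point is the observation that $f$ must send self-adjoint elements into $\R$, and this is handled cleanly by the eigenvalue consequence of Lemma~\ref{eigenvalue lemma}: the eigenvalues of a self-adjoint matrix are real. Everything else is a routine check that restricting a partial-algebra morphism to a sub-partial-algebra (here $A_{sa} \subseteq \M_n(\Complex)$, with scalars cut down from $\Complex$ to $\R$) again yields a partial-algebra morphism. So the proof is short: invoke Lemma~\ref{eigenvalue lemma} to land in $\R$, then invoke the Kochen--Specker Theorem.
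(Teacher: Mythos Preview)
Your proposal is correct and is essentially identical to the paper's own proof: the paper also invokes Lemma~\ref{eigenvalue lemma} (self-adjoint matrices have real eigenvalues, so $f(A_{sa}) \subseteq \R$) to restrict a hypothetical morphism $A \to \Complex$ to a morphism of partial $\R$-algebras $A_{sa} \to \R$, and then applies the Kochen--Specker Theorem to obtain a contradiction. Your write-up is simply a more expanded version of the same argument.
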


\begin{proof}
Let $A = \M_n(\mathbb{C})$. Every self-adjoint matrix in $A$ has real eigenvalues, so
Lemma~\ref{eigenvalue lemma} implies that a morphism of partial $\Complex$-algebras
$A \to \Complex$ restricts to a morphism of partial $\mathbb{R}$-algebras $A_{sa} \to \mathbb{R}$.
But such morphisms are forbidden by the Kochen-Specker Theorem~\ref{Kochen-Specker theorem}.
\end{proof}

It is natural to ask what is the status of Corollary~\ref{Kochen-Specker corollary} in the case
$n=2$. Regarding their original theorem, Kochen and Specker demonstrated the existence of a
morphism of partial $\mathbb{R}$-algebras $\M_2(\mathbb{C})_{sa} \to \mathbb{R}$
in~\cite[\S6]{KochenSpecker}, showing that the Kochen-Specker Theorem does not extend to $n = 2$.
Similarly, Corollary~\ref{Kochen-Specker corollary} does not extend to $n = 2$. There exist
morphisms of partial algebras $\M_2(\Complex) \to \Complex$, and we can describe all of them
as follows.
Incidentally, this result also shows that the statement of Theorem~\ref{main theorem} is
not valid in the case $n = 2$; the functor $F = \partSpec$ assigns a nonempty set
(of cardinality $2^{2^{\aleph_0}}$, in fact!)\ to $\M_2(\Complex)$.

\begin{proposition}
\label{the case n = 2}
Let $k$ be an algebraically closed field, and let $\mathcal{I} \subseteq A := \M_2(k)$ be any set of
idempotents such that the set of all idempotents of $A$ is partitioned as
\[
\{ 0, 1\} \sqcup \mathcal{I} \sqcup \{ 1-e : e \in \mathcal{I} \}.
\]
Then for every function $\alpha \colon \mathcal{I} \to \{0,1\}$ there is a morphism of partial
$k$-algebras $f_{\alpha} \colon A \to k$ such that the restriction of $f$ to $\mathcal{I}$
is $\alpha \colon \mathcal{I} \to \{0,1\} \subseteq k$.
Moreover, there are bijective correspondences between:
\begin{itemize}
\item the set of functions $\alpha \colon \mathcal{I} \to \{0,1\}$;
\item the set of morphisms of partial $k$-algebras $A \to k$; and
\item the set of prime partial ideals of $A$;
\end{itemize}
given by $\alpha \leftrightarrow f_{\alpha} \leftrightarrow f_{\alpha}^{-1}(0)$.
\end{proposition}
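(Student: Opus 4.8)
The plan is to understand the partial algebra structure on $A = \M_2(k)$ by exploiting the fact that every maximal commeasurable subalgebra is essentially determined by an idempotent. First I would analyze $\C_k(A)$: a matrix $x \in \M_2(k)$ that is not a scalar has two distinct eigenvalues (in which case $k[x]$ is a product of two copies of $k$, spanned by a complementary pair of idempotents) or a repeated eigenvalue (in which case $k[x] = k[n]$ for a nilpotent $n$ with $n^2 = 0$). One checks that any two non-scalar, non-commuting matrices generate all of $\M_2(k)$, which is noncommutative, hence \emph{not} a commeasurable subalgebra; so a maximal commeasurable subalgebra is exactly the centralizer of a single non-scalar element, and these fall into two types: $C_e := ke \oplus k(1-e) \cong k \times k$ for $e$ a nontrivial idempotent, and $N_v := k \oplus kv$ for $v$ a square-zero nilpotent. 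Crucially, each $N_v$ contains a unique nontrivial idempotent-free structure — it has \emph{no} nontrivial idempotents — and its unique maximal ideal $kv$ is forced, so on the $N_v$'s a partial ideal has no choice. The real freedom lies with the $C_e$'s.

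Next I would invoke Proposition~\ref{finite dimensional partial Spec}: since $k$ is algebraically closed and $A$ is finite-dimensional, $\partSpec(A)$ is in bijection with the set of morphisms of partial $k$-algebras $f\colon A \to k$, via $f \mapsto f^{-1}(0)$. So it suffices to establish the bijection between such morphisms and functions $\alpha\colon \mathcal{I} \to \{0,1\}$. By Lemma~\ref{eigenvalue lemma}, $f(r)$ is an eigenvalue of $r$ for every $r$; in particular $f(e) \in \{0,1\}$ for every idempotent $e$, and $f(1-e) = 1 - f(e)$, so $f$ is completely determined on all idempotents by its restriction $\alpha$ to $\mathcal{I}$ (the chosen transversal of the pairs $\{e, 1-e\}$). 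Moreover $f(v) = 0$ for every square-zero $v$, again by Lemma~\ref{eigenvalue lemma}. Now, using that $A$ is the union of its maximal commeasurable subalgebras and that a morphism of partial algebras is exactly a compatible family of $k$-algebra homomorphisms on each commeasurable subalgebra (Lemma~\ref{data determining partial ideal}(4) together with Proposition~\ref{finite dimensional partial Spec} applied inside each $C$), the morphism $f$ is uniquely pinned down: on $C_e \cong k \times k$ it is the projection selected by $\alpha(e)$ (or $1 - \alpha(e)$), and on $N_v$ it is the unique character. This proves injectivity of $f \mapsto \alpha$ and that every $f$ has the claimed form.

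For surjectivity — producing $f_\alpha$ from an arbitrary $\alpha$ — I would define $f_\alpha$ elementwise: given $r \in A$, pick any maximal commeasurable subalgebra $C \ni r$ and set $f_\alpha(r)$ to be the value of the appropriate character of $C$ (determined by $\alpha$ on the idempotents of $C$, or the unique character if $C = N_v$). The content is \emph{well-definedness and compatibility}: if $r$ lies in two maximal commeasurable subalgebras $C_1, C_2$, one must check the two prescriptions agree, and more generally that the family $\{f_\alpha|_C\}$ glues. The key point is that $C_1 \cap C_2$ is commutative and the idempotents of $A$ lying in a given $C_e$ are exactly $\{0, 1, e, 1-e\}$, so the compatibility condition $I(C_1) \cap C_2 = C_1 \cap I(C_2)$ of Lemma~\ref{data determining partial ideal}(4) reduces to checking agreement on shared idempotents and shared nilpotents, which holds by construction since $\alpha$ assigns a single value to each pair $\{e,1-e\}$ and nilpotents are sent to $0$ unconditionally. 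This is the step I expect to be the main obstacle — not because it is deep, but because it requires a careful case analysis of how two maximal commeasurable subalgebras of $\M_2(k)$ can intersect (scalars only; a common $k \times k$ summand; or a common $k[v]$), and verifying glueability in each case. Finally, the cardinality remark in the surrounding text follows: over $k = \Complex$ the nontrivial idempotents of $\M_2(\Complex)$ (rank-one projections, parametrized by a line in $\Complex^2$ and its complement) form a set of cardinality $2^{\aleph_0}$, $\mathcal{I}$ has the same cardinality, and so the set of functions $\alpha\colon\mathcal{I}\to\{0,1\}$ — hence $\partSpec(\M_2(\Complex))$ — has cardinality $2^{2^{\aleph_0}}$.
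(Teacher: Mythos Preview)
Your proposal is correct and close in spirit to the paper's argument, but the execution differs in one notable way. The paper does not define $f_\alpha$ directly and then check compatibility on overlaps; instead it first builds an auxiliary \emph{commutative} $k$-algebra $B = k[x_e, x_n : e \in \mathcal{I},\, n \in \mathcal{N}]$ with relations $x_e^2 = x_e$ and $x_n^2 = 0$, constructs a single morphism of partial algebras $h \colon A \to B$ (sending each generator $b \in \mathcal{I} \sqcup \mathcal{N}$ to $x_b$), and then obtains $f_\alpha$ as the composite $g_\alpha \circ h$ for an honest algebra homomorphism $g_\alpha \colon B \to k$. This factorization absorbs the compatibility check into the well-definedness of $h$, which the paper dispatches cleanly by citing Schur's theorem that every maximal commutative subalgebra of $\M_2(k)$ is $2$-dimensional, so \emph{any two distinct such subalgebras intersect exactly in the scalars} $k$. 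Your anticipated three-case analysis (``scalars only; a common $k \times k$ summand; or a common $k[v]$'') therefore collapses to just the first case, and compatibility is automatic since every character sends $\lambda \cdot 1 \mapsto \lambda$. The paper's route via $B$ is slicker and yields a useful byproduct (the algebra $B$ reappears later as the colimit $\alpha(\M_2(k))$); your direct approach is more hands-on but entirely valid once you invoke the Schur dimension bound to trivialize the gluing step.
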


\begin{proof}
First we construct a commutative $k$-algebra $B$ with a morphism of partial $k$-algebras $h \colon A \to B$.
Let $\mathcal{N}$ be a set of nonzero nilpotent elements of $A$ such that every nonzero nilpotent matrix in $A$
has exactly one scalar multiple in $\mathcal{N}$. Let $B$ be the commutative $k$-algebra
$B := k[ x_e, x_n : e \in \mathcal{I}, n \in \mathcal{N}]$ with relations $x_e^2 = x_e$ for $e \in \mathcal{I}$
and $x_n^2 = 0$ for $n \in \mathcal{N}$.

A result of Schur~\cite{Schur} (also proved more generally by Jacobson~\cite[Thm.~1]{Jacobson})
implies that every maximal commutative subalgebra of $A$ is has $k$-dimension~2. Thus the
intersection of two distinct commutative subalgebras of $A$ is the scalar subalgebra
$k \subseteq A$. This makes it easy to see that a function $h \colon A \to B$ is a morphism of
partial $k$-algebras if and only if its restriction to every 2-dimensional commutative subalgebra
of $A$ is a $k$-algebra homomorphism.

Now define a function $h \colon A \to B$ as follows. For each scalar $\lambda \in k \subseteq A$,
we set $h(\lambda) = \lambda \in k \subseteq B$. Now assume $a \in A \setminus k$. Then $k[a]$
is a 2-dimensional commutative subalgebra of $A$. Because the only 2-dimensional algebras over
the algebraically closed field $k$ are $k \times k$ and $k[\varepsilon]/(\varepsilon^2)$, there
exists $b \in \mathcal{I} \sqcup \mathcal{N}$ such that $k[a] = k[b]$. The careful choice of
the sets $\mathcal{I}$ and $\mathcal{N}$ ensures that this $b$ is unique.
Thus it suffices to define $h$ on each $k[b]$. But for $b \in \mathcal{I} \sqcup \mathcal{N}$, the
map $k[b] \to B$ defined by sending $b \mapsto x_b$ is clearly a homomorphism of $k$-algebras.
We define the restriction of $h$ to $k[a] = k[b]$ to be this homomorphism, which in particular
defines the value $h(a)$.

Certainly $h$ is well-defined, and it is a morphism of partial algebras because its restriction
to every 2-dimensional subalgebra is an algebra homomorphism.
Thus we have constructed a morphism of partial algebras $h \colon A \to B$.

Given a function $\alpha \colon \mathcal{I} \to \{0,1\}$, there exists a $k$-algebra homomorphism
$g_{\alpha} \colon B \to k$ given by sending $x_e \mapsto \alpha(e) \in k$ for $e \in \mathcal{I}$
and $x_n \mapsto 0$ for $n \in \mathcal{N}$. So the composite
$f_{\alpha} := g_{\alpha} \circ h$ is a morphism of partial $k$-algebras whose restriction to
$\mathcal{I}$ is equal to $\alpha$.
The bijection between the three sets in the statement of the proposition follows directly from
Proposition~\ref{finite dimensional partial Spec} above and Lemma~\ref{values on idempotents}
below.
\end{proof}

\begin{lemma}
\label{values on idempotents}
Let $R$ be a partial algebra over an algebraically closed field $k$ in which every element is
algebraic (e.g., $R$ is a finite dimensional $k$-algebra).
A morphism of partial algebras $R \to k$ is uniquely determined by its restriction to the set of
idempotents of $R$.
\end{lemma}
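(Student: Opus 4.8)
The plan is to reduce the problem to the structure of finite-dimensional commutative algebras over an algebraically closed field, where everything is controlled by idempotents. First I would observe that since every element of $R$ is algebraic over $k$, each $r \in R$ lies in a finite-dimensional commeasurable subalgebra, namely $k[r]$; so by Lemma~\ref{data determining partial ideal}(1) and the discussion around Proposition~\ref{finite dimensional partial Spec}, a morphism $f \colon R \to k$ is determined by its restrictions $f|_{k[r]}$ to these finite-dimensional pieces. Thus it suffices to show that for a finite-dimensional commutative $k$-algebra $C$ (with $k$ algebraically closed), a $k$-algebra homomorphism $C \to k$ is determined by its values on the idempotents of $C$. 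Crucially, these idempotents all lie inside $R$ and are commeasurable with $r$ (they are polynomials in $r$, or at least lie in the commutative algebra generated by $r$), so knowing $f$ on all idempotents of $R$ pins down $f$ on the idempotents of each such $C$.

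The key algebraic input is the structure theorem: a finite-dimensional commutative $k$-algebra $C$ decomposes as a finite product $C \cong C_1 \times \cdots \times C_m$ of local artinian $k$-algebras, where the decomposition corresponds to the complete set of primitive orthogonal idempotents $e_1, \dots, e_m$ of $C$. A $k$-algebra homomorphism $\varphi \colon C \to k$ must send each $e_i$ to an idempotent of $k$, i.e.\ to $0$ or $1$, and since the $e_i$ are orthogonal with $\sum e_i = 1$ we get $\varphi(e_j) = 1$ for exactly one index $j$ and $\varphi(e_i) = 0$ otherwise. This means $\varphi$ factors through the projection $C \twoheadrightarrow C_j = e_j C$ onto the unique local factor picked out by the idempotent data. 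Now $C_j$ is local artinian with residue field $k$ (as $k$ is algebraically closed), so its maximal ideal $\mathfrak{m}_j$ is the nilradical, and the composite $C_j \to k$ is forced: it is the unique augmentation $C_j \to C_j/\mathfrak{m}_j \cong k$. Hence $\varphi$ itself is completely determined once we know which primitive idempotent it sends to $1$ — that is, once we know $\varphi$ on the idempotents of $C$.

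Assembling these local computations, I would argue as follows: suppose $f, g \colon R \to k$ are morphisms of partial algebras agreeing on all idempotents of $R$. Fix $r \in R$ and put $C = k[r]$, a finite-dimensional commutative subalgebra. The primitive idempotents of $C$ lie in $R$, so $f$ and $g$ agree on them; by the previous paragraph the restrictions $f|_C$ and $g|_C$ are both equal to the unique $k$-algebra homomorphism $C \to k$ selected by that common idempotent data, hence $f(r) = g(r)$. Since $r$ was arbitrary, $f = g$.

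The main obstacle I anticipate is a bookkeeping subtlety rather than a deep one: one must be careful that the idempotents used to analyze $f|_{k[r]}$ are genuinely idempotents \emph{of $R$} (so that the hypothesis applies to them) and that they are commeasurable with $r$ (so that $f$ being a partial-algebra morphism forces $f|_{k[r]}$ to respect the product decomposition of $k[r]$). Both points follow because the primitive idempotents of $k[r]$ are polynomials in $r$ — the idempotents of $C$ lie in $C$ — so they automatically lie in $R$ and are commeasurable with $r$; but this should be stated explicitly. The only genuinely mathematical content is the elementary structure theory of finite-dimensional commutative algebras over an algebraically closed field, which is standard.
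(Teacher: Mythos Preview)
Your proposal is correct and follows essentially the same approach as the paper: reduce to finite-dimensional commeasurable subalgebras, decompose such a $C$ as a product of local artinian $k$-algebras via its primitive idempotents, and observe that the nilpotent maximal ideal of each local factor must land in the kernel, so the homomorphism is pinned down by which idempotent goes to $1$. Your version is in fact slightly more explicit than the paper's about the factorization through the chosen local piece and about why the idempotents of $k[r]$ are honest idempotents of $R$; the paper's proof is the same argument stated more tersely.
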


\begin{proof}
Let $f \colon R \to k$ be a morphism of partial $k$-algebras, and let $C \subseteq R$ be a
finite dimensional commeasurable subalgebra of $R$.
Because $R$ is the union of its finite dimensional commeasurable subalgebras, it is enough to show
that the restriction of $f$ to $C$, which is a $k$-algebra homomorphism $C \to k$, is uniquely
determined by its values on the idempotents of $C$.

Because $C$ is finite dimensional it is artinian and thus is a finite direct sum of local $k$-algebras.
Write $C = A_1 \oplus \cdots \oplus A_n$ where each $(A_i, M_i)$ is local and the identity element
of $A_i$ is $e_i$, an idempotent of $C$. Since $k$ is algebraically closed, each of the residue
fields $A_i/M_i$ is isomorphic to $k$ as a $k$-algebra.
Thus each $A_i = ke_i \oplus M_i$. Because $A_i$ is finite dimensional, its Jacobson radical $M_i$
is nilpotent and hence is in the kernel of $f|_C$. 
It now follows easily that the restriction of $f$ to $C$ is determined by the values $f(e_i)$.
\end{proof}

\section{Proof and consequences of the main result}
\label{proof section}

We are now prepared to prove Theorem~\ref{main theorem}, the main ring-theoretic result of the paper.

\begin{proof}[Proof of Theorem~\ref{main theorem}]
Fix $n \geq 3$ and let $A = \M_n(\mathbb{C}))$.
According to Theorem~\ref{universal Spec theorem} there exists a natural transformation
$F \to \partSpec$.
By Proposition~\ref{finite dimensional partial Spec}, $\partSpec(A)$ is in bijection
with the set of morphisms of partial $\Complex$-algebras $A \to \mathbb{C}$.
No such morphisms exist according to Corollary~\ref{Kochen-Specker corollary} of the Kochen-Specker
Theorem, so $\partSpec(A) = \varnothing$.
The existence of a function $F(A) \to \partSpec(A) = \varnothing$ now implies that
$F(A) = \varnothing$.
\end{proof}

\smallseparate

It seems appropriate to mention some partial positive results that contrast with
Theorem~\ref{main theorem}.
One might hope that restricting to certain well-behaved ring homomorphisms could allow the
functor $\Spec$ to be ``partially extended.''
In this vein, Procesi has shown~\cite[Lem.~2.2]{Procesi} that if $f \colon R \to S$ is a
ring homomorphism such that $S$ is generated over $f(R)$ by elements centralizing $f(R)$,
then for every prime ideal $Q \lhd S$ the inverse image $f^{-1}(Q)$ is again prime. 
Furthermore, he showed in~\cite[Thm.~3.3]{Procesi} that if $R$ is a Jacobson PI ring and
$f \colon R \to S$ is a ring homomorphism such that $S$ is generated by the image $f(R)$ and
finitely many elements that centralize $f(R)$,  then for every maximal ideal $M \lhd S$ the
inverse image $f^{-1}(M)$ is a maximal ideal of $R$. (Although he only stated these results
for injective $f$, they are easily seen to hold more generally.)

On the other hand, one may try to replace functions between prime spectra by ``multi-valued
functions,'' which may send a single element of one set to many elements of another set.
For instance, one might consider a functor that maps each homomorphism $R \to S$ of
noncommutative rings to a \emph{correspondence} $\Spec(S) \to \Spec(R)$, which
sends a single prime ideal of $S$ to some nonempty finite set of prime ideals of $R$.
This notion was introduced by Artin and Schelter in~\cite[\S4]{ArtinSchelter} and studied in
further detail by Letzter in~\cite{Letzter}. There is an appropriate notion of ``continuity'' of
a correspondence, and it is shown in~\cite[Cor.~2.3]{Letzter} (see
also~\cite[Prop.~4.6]{ArtinSchelter}) that if $f \colon R \to S$ is a ring homomorphism
and $S$ is a PI ring, then the associated correspondence is continuous.
However, there exist homomorphisms between noetherian rings whose correspondence
is not continuous~\cite[\S2.5]{Letzter}.

\separate

We now present a few corollaries of Theorem~\ref{main theorem}. The first is a straightforward
generalization of that theorem replacing $\M_n(\Complex)$ with $\M_n(R)$ where $R$ is
any ring containing a field isomorphic to $\Complex$.

\begin{corollary}
\label{matrix algebra corollary}
Let $F \colon \Ring \to \Set$ be a contravariant functor whose restriction to the full subcategory of commutative rings is isomorphic to
$\Spec$.  If $R$ is any ring with a homomorphism $\Complex \to R$, then $F(\M_n(R)) = \varnothing$ for $n \geq 3$.
\end{corollary}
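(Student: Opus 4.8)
The plan is to reduce Corollary~\ref{matrix algebra corollary} to Theorem~\ref{main theorem} by producing a ring homomorphism from $\M_n(\Complex)$ to $\M_n(R)$ and then exploiting functoriality. Given a ring homomorphism $\varphi \colon \Complex \to R$, applying $\M_n(-)$ entrywise yields a ring homomorphism $\M_n(\varphi) \colon \M_n(\Complex) \to \M_n(R)$; since $\varphi$ is unital (all our ring homomorphisms preserve $1$), so is $\M_n(\varphi)$. Applying the contravariant functor $F$ to this homomorphism gives a map of sets
\[
F(\M_n(\varphi)) \colon F(\M_n(R)) \to F(\M_n(\Complex)).
\]

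By Theorem~\ref{main theorem}, $F(\M_n(\Complex)) = \varnothing$ for $n \geq 3$. But a set admitting a function into the empty set must itself be empty: there is no function from a nonempty set to $\varnothing$. Hence $F(\M_n(R)) = \varnothing$, which is exactly the claim.

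The only point requiring any care is the hypothesis that $F$'s restriction to $\CommRing$ is isomorphic to $\Spec$ — but this hypothesis is used only insofar as it is the hypothesis of Theorem~\ref{main theorem}, which we invoke as a black box; no further properties of $F$ on commutative rings are needed here. I do not anticipate any genuine obstacle: the argument is a one-line application of the contravariant functoriality of $F$ together with the fact that the empty set is strict terminal-reversed, i.e.\ receives no maps from nonempty sets. The entire proof can be written in three or four sentences.
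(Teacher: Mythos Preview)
Your proof is correct and follows exactly the same approach as the paper: lift the homomorphism $\Complex \to R$ to $\M_n(\Complex) \to \M_n(R)$, apply $F$ contravariantly, and use Theorem~\ref{main theorem} together with the fact that only the empty set maps to the empty set. The paper's proof is essentially your three-or-four-sentence version without the additional commentary.
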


\begin{proof}
The homomorphism $\mathbb{C} \to R$ induces a homomorphism $\M_n(\mathbb{C}) \to \M_n(R)$. Thus
we have a set map $F(\M_n(R)) \to F(\M_n(\mathbb{C}))$. If $n \geq 3$ then by Theorem~\ref{main theorem}
the latter set is empty; hence the former set must also be empty.
\end{proof}

In the corollary above, $R$ can be any complex algebra. But rings that contain $\Complex$ as a
non-central subring, such as the real quaternions, are also allowed. On the other hand, suppose
that $R$ is a complex algebra such that $R \cong \M_n(R)$ for some $n \geq 2$. It follows that
$R \cong \M_n(R) \cong \M_n(\M_n(R)) \cong \M_{n^2}(R)$, so we may assume that $n \geq 4$. Then
the corollary implies that for functors $F$ as above, $F(R) \cong F(\M_n(R)) = \varnothing$.
For instance, if $V$ is an infinite dimensional $\Complex$-vector space and $R$ is the algebra
of $\Complex$-linear endomorphisms of $V$, then the existence of a vector space isomorphism
$V \cong V^{\oplus n}$ (any $n \geq 2$) implies the existence of an algebra isomorphism
$R \cong \M_n(R)$.

An attempt to extend the ideas above suggests one possible algebraic generalization of the
Kochen-Specker Theorem.
Suppose that $\partSpec(\M_n(\Z)) = \varnothing$ for some integer $n \geq 3$. For any ring
$R$ the canonical ring homomorphism $\Z \to R$ induces a morphism $\M_n(\Z) \to \M_n(R)$.
Then one would have $\partSpec(\M_n(R)) = \varnothing$.
It would follow that any contravariant functor $F \colon \Ring \to \Set$ whose restriction to
$\CommRing$ is isomorphic to $\Spec$ must assign the empty set to $\M_n(R)$ for any ring $R$.
This highlights the importance of the following question.

\begin{question}
Do there exist integers $n \geq 3$ such that $\partSpec(\M_n(\Z)) = \varnothing$?
\end{question}

If $\partSpec(\M_n(\Z))$ were in fact empty for all sufficiently large values of $n$, then this
would be a sort of ``integer-valued'' Kochen-Specker Theorem.

\smallseparate

The next corollary of Theorem~\ref{main theorem} concerns certain functors sending rings to
commutative rings.
Consider the functor $\Ring \to \CommRing$ that sends each ring $R$ to its ``abelianization''
$R/[R,R]$. Rings whose abelianization is zero are easy to find, and this functor necessarily
destroys all information about these rings.
One could try to abstract this functor by considering any functor $\Ring \to \CommRing$ whose
restriction to $\CommRing$ is isomorphic to the identity functor. The following result says that
every such ``abstract abelianization functor'' necessarily destroys matrix algebras.

\begin{corollary}
\label{abelianization corollary}
Let $\alpha \colon \Ring \to \CommRing$ be a functor such that the restriction of $\alpha$ to
$\CommRing$ is isomorphic to the identity functor.
Then for any ring $R$ with a homomorphism $\Complex \to R$ and any $n \geq 3$, one has
$\alpha(\M_n(R)) = 0$. In particular, $\alpha$ is not faithful.
\end{corollary}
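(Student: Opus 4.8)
The plan is to reduce Corollary~\ref{abelianization corollary} to Corollary~\ref{matrix algebra corollary} by manufacturing, from the abstract abelianization functor $\alpha$, a contravariant functor $\Ring \to \Set$ whose restriction to $\CommRing$ is isomorphic to $\Spec$. The natural candidate is the composite $F := \Spec \circ \alpha$: since $\alpha$ lands in $\CommRing$, applying the honest functor $\Spec \colon \CommRingop \to \Set$ to it gives a contravariant functor $F \colon \Ring \to \Set$. Its restriction to $\CommRing$ is $\Spec \circ (\alpha|_{\CommRing})$, and because $\alpha|_{\CommRing} \cong \mathrm{id}_{\CommRing}$ as functors, composing with $\Spec$ yields $\restrict(F) = \Spec \circ \alpha|_{\CommRing} \cong \Spec \circ \mathrm{id}_{\CommRing} = \Spec$. (Here I am using that composition of functors respects natural isomorphism: a natural isomorphism $\alpha|_{\CommRing} \Rightarrow \mathrm{id}$ induces a natural isomorphism $\Spec \circ \alpha|_{\CommRing} \Rightarrow \Spec$.) Thus $F$ satisfies the hypotheses of Corollary~\ref{matrix algebra corollary}.

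From there the argument is immediate. Given a ring $R$ with a homomorphism $\Complex \to R$ and $n \geq 3$, Corollary~\ref{matrix algebra corollary} gives $F(\M_n(R)) = \Spec(\alpha(\M_n(R))) = \varnothing$. But the prime spectrum of a commutative ring is empty if and only if the ring is the zero ring, so $\alpha(\M_n(R)) = 0$, which is the desired conclusion.

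For the final assertion that $\alpha$ is not faithful, I would exhibit two distinct morphisms with the same image under $\alpha$. Take $R = \M_n(\Complex)$ for some fixed $n \geq 3$ (which has a homomorphism from $\Complex$, namely the scalar embedding). By the preceding paragraph, $\alpha(R) = \alpha(\M_n(\Complex)) = 0$. Then $\Hom_{\CommRing}(\alpha(R), \alpha(R)) = \Hom_{\CommRing}(0,0)$ has exactly one element, so any two endomorphisms of $R$ — for instance the identity map and, say, conjugation by a non-scalar invertible matrix, or more simply the identity and any nontrivial inner automorphism of $\M_n(\Complex)$ — are sent to the same morphism by $\alpha$. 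Since $\M_n(\Complex)$ for $n \geq 3$ has more than one endomorphism (indeed more than one automorphism), $\alpha$ fails to be faithful.

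The only genuinely delicate point is the bookkeeping in the first paragraph: one must check that $F := \Spec \circ \alpha$ really is a legitimate object of $\restrict^{-1}(\Spec)$, i.e.\ that the natural isomorphism $\alpha|_{\CommRing} \cong \mathrm{id}_{\CommRing}$ transports correctly through $\Spec$ to give a natural isomorphism $\restrict(F) \cong \Spec$. This is a routine use of whiskering (horizontal composition of a natural transformation with a functor, exactly as discussed before Theorem~\ref{universal Spec theorem}), so it is not a serious obstacle — but it is the step that makes the whole corollary work, and it is worth stating explicitly rather than leaving implicit.
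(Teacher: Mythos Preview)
Your proposal is correct and follows essentially the same approach as the paper: define $F := \Spec \circ \alpha$, verify $F|_{\CommRing} \cong \Spec$, apply Corollary~\ref{matrix algebra corollary} to conclude $\Spec(\alpha(\M_n(R))) = \varnothing$ and hence $\alpha(\M_n(R)) = 0$, then exhibit non-faithfulness via the nontrivial inner automorphisms of $\M_n(\Complex)$ mapping into $\Hom_{\CommRing}(0,0)$. Your added remark about the whiskering step is a welcome clarification of something the paper leaves implicit.
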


\begin{proof}
Because $\alpha$ restricts to the identity functor on $\CommRing$, the contravariant functor
$F := \Spec \circ \alpha \colon \Ring \to \Set$ satisfies $F|_{\CommRing} \cong \Spec$.
For $n \geq 3$, Corollary~\ref{matrix algebra corollary} implies that
$\Spec(\alpha(\M_n(R)) = F(\M_n(R)) = \varnothing$. Hence the commutative ring
$\alpha(\M_n(R))$ is zero. 

To see that $\alpha$ is not faithful, fix $n \geq 3$ and consider that $\alpha$ induces a function
\[
\Hom_{\Ring}(\M_n(\Complex), \M_n(\Complex)) \to
\Hom_{\CommRing}(\alpha(\M_n(\Complex)), \alpha(\M_n(\Complex)) = \Hom_{\CommRing}(0, 0).
\]
The latter set is a singleton, while the former set is not a singleton (because $\M_n(\Complex)$
has nontrivial inner automorphisms). So the function above is not injective, proving that $\alpha$
is not faithful.
\end{proof}

Interestingly, this result does not hold in the case $n = 2$; we thank George Bergman for
this observation. Let $\alpha \colon \Ring \to \CommRing$ be the functor sending each ring
to the colimit of the diagram of its commutative subrings. 
Certainly $\alpha|_{\CommRing}$ is isomorphic to the identity functor on $\CommRing$.
One can check that for an algebraically closed field $k$, the commutative ring $\alpha(\M_2(k))$
is isomorphic to the algebra $B$ constructed in the proof of Proposition~\ref{the case n = 2}; in
particular, $\alpha(\M_2(k)) \neq 0$.
(At the very least, it is not hard to verify from the universal property of the colimit that there
exists a homomorphism $\alpha(\M_2(k)) \to B$, confirming that $\alpha(\M_2(k)) \neq 0$.)
Furthermore, one can show that this functor is initial among all ``abstract abelianization
functors,'' but the details will not be presented here.

\smallseparate

The final corollary of Theorem~\ref{main theorem} to be presented in this section is a
rigorous proof that the rule that assigns to each (not necessarily commutative) ring $R$ the
set $\Spec(R)$ of prime ideals of $R$ is ``not functorial.''
(Recall that an ideal $P \lhd R$ is \emph{prime} if, for all ideals $I, J \lhd R$,
$IJ \subseteq P$ implies that either $I \subseteq P$ or $J \subseteq P$.)
The fact that this assignment ``is not a functor'' seems to be common wisdom. 
(Specific mention of this idea in the literature is not widespread, but
see~\cite[pp.~1 and~36]{vanOystaeyenVerschoren} or~\cite[\S1]{Letzter} for examples.)
It is easy to verify that this assignment is not a functor in the natural way; that is, if
$f \colon R \to S$ a ring homomorphism and $P \lhd S$ is prime, one can readily see that
the ideal $f^{-1}(P) \lhd R$ need not be prime.
However, we are unaware of any rigorous statement or proof in the literature of the
precise result below.

\begin{corollary}
\label{prime ideals not functorial}
There is no contravariant functor $F \colon \Ring \to \Set$ whose restriction to the full
subcategory $\CommRing$ is isomorphic to $\Spec$ and such that, for every ring $R$,
the set $F(R)$ is in bijection with the set of prime ideals of $R$.
\end{corollary}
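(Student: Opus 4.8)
The plan is to derive a contradiction by exhibiting a ring whose set of prime ideals is nonempty but which must be assigned the empty set by any functor $F$ as in Theorem~\ref{main theorem}. Suppose such an $F$ exists. The ring $A = \M_n(\Complex)$ for $n \geq 3$ is simple, so its only two-sided ideals are $0$ and $A$; consequently $0$ is a prime ideal of $A$ in the sense recalled in the statement (since $IJ \subseteq 0$ with $I,J$ ideals forces $I = 0$ or $J = 0$ by simplicity), and in fact $0$ is the \emph{only} prime ideal, so the set of prime ideals of $A$ is a singleton. In particular it is nonempty. But by hypothesis $F(A)$ is in bijection with the set of prime ideals of $A$, hence $F(A) \neq \varnothing$, while Theorem~\ref{main theorem} gives $F(A) = \varnothing$. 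This contradiction completes the proof.

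The only step that requires any care is the verification that $\M_n(\Complex)$ has a prime ideal, i.e.\ that its set of prime ideals is nonempty — and this is immediate from simplicity as above, since the zero ideal of any simple ring (more generally, of any prime ring) is prime. I would state this explicitly as a one-line observation rather than invoking it silently, since the whole point of the corollary is to be a \emph{rigorous} version of a piece of folklore. It is worth noting that one does not even need the full strength of "bijection"; one only needs that $F(R)$ is empty if and only if the set of prime ideals of $R$ is empty, and the latter fails for $R = \M_n(\Complex)$.

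The main obstacle, such as it is, is purely expository: making clear that the contradiction is between the \emph{conclusion of Theorem~\ref{main theorem}} and the \emph{defining property of $F$}, so no new mathematics beyond Theorem~\ref{main theorem} is needed. I would write the proof as follows.

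\begin{proof}
Suppose toward a contradiction that such a functor $F$ exists, and fix $n \geq 3$. Set $A = \M_n(\Complex)$. Since $A$ is a simple ring, its only two-sided ideals are $0$ and $A$. In particular, for ideals $I, J \lhd A$ with $IJ \subseteq 0$ we must have $I = 0$ or $J = 0$, so the zero ideal of $A$ is prime; thus the set of prime ideals of $A$ is nonempty (it is in fact the singleton $\{0\}$). By the assumed property of $F$, the set $F(A)$ is in bijection with this set, so $F(A) \neq \varnothing$. On the other hand, Theorem~\ref{main theorem} asserts that $F(A) = F(\M_n(\Complex)) = \varnothing$. This contradiction shows that no such functor $F$ exists.
\end{proof}
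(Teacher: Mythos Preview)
Your proof is correct and follows essentially the same approach as the paper's: both argue that the zero ideal of $\M_n(\Complex)$ is its unique prime ideal, so the hypothesis on $F$ forces $F(\M_n(\Complex)) \neq \varnothing$, contradicting Theorem~\ref{main theorem}. Your version simply spells out the simplicity argument in slightly more detail than the paper does.
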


\begin{proof}
Assume for contradiction that such $F$ exists.
Fix $n \geq 3$. Because the zero ideal of $\M_n(\Complex)$ is (its unique) prime, the
assumption on $F$ implies $F(\M_n(\Complex)) \neq \varnothing$, violating
Theorem~\ref{main theorem}.
\end{proof}

This corollary can also be derived from an elementary argument that avoids using
Theorem~1.1. In fact, the statement can even be strengthened as follows.

\begin{proposition}
\label{Morita invariant Spec}
There is no contravariant functor $F\colon\Ring\to\Set$ whose restriction to $\CommRing$ is
isomorphic to $\Spec$ and such that $F$ satisfies either of the following conditions:
\begin{enumerate}
\item For some field $k$ and some integer $n \geq 2$, the set $F(\M_n(k))$ is a singleton;
\item $F$ is Morita invariant in the following sense: for any Morita equivalent rings $R$ and $S$, one has $F(R)\cong F(S)$.
\end{enumerate}
\end{proposition}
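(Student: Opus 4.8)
The plan is to give a short, elementary argument that does not use the Kochen--Specker Theorem or the functor $\partSpec$. The key observation is that a singleton cannot record the inner automorphisms of $\M_n(k)$, whereas the subring of diagonal matrices forces it to. I would first reduce case (2) to case (1): for any ring $R$ and any $n$, the rings $R$ and $\M_n(R)$ are Morita equivalent, so if $F$ is Morita invariant then $F(\M_2(k)) \cong F(k) \cong \Spec(k)$ for any field $k$, which is a one-point set; thus (2) implies the hypothesis of (1) with $n = 2$. So it suffices to assume $F(\M_n(k))$ is a singleton $\{x\}$ for some field $k$ and some $n \geq 2$, fix an isomorphism $\psi \colon F|_{\CommRing} \to \Spec$, and derive a contradiction.

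Next I would bring in the subring $D \subseteq \M_n(k)$ of diagonal matrices, so $D \cong k^n$ and $\Spec(D) = \{\mathfrak{p}_1, \dots, \mathfrak{p}_n\}$, where $\mathfrak{p}_i$ is the maximal ideal of diagonal matrices with vanishing $(i,i)$-entry. For each $\sigma \in S_n$, conjugation by the permutation matrix $P_\sigma$ is an automorphism $c_\sigma$ of $\M_n(k)$ that stabilizes $D$ and restricts on $D \cong k^n$ to the coordinate permutation $\tau_\sigma$; hence, writing $\iota \colon D \hookrightarrow \M_n(k)$ for the inclusion, one has $c_\sigma \circ \iota = \iota \circ \tau_\sigma$ for every $\sigma$. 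Applying the contravariant functor $F$ gives
\[
F(\iota) \circ F(c_\sigma) = F(\tau_\sigma) \circ F(\iota) \colon F(\M_n(k)) \to F(D).
\]
Since $F(\M_n(k)) = \{x\}$, the map $F(c_\sigma)$ is the identity, so $F(\iota)(x)$ is fixed by $F(\tau_\sigma)$ for all $\sigma$; transporting along $\psi_D$ and using naturality, the point $\psi_D(F(\iota)(x)) \in \Spec(D)$ is fixed by $\Spec(\tau_\sigma)$ for every $\sigma \in S_n$. But $S_n$ acts on $\{\mathfrak{p}_1, \dots, \mathfrak{p}_n\}$ by the standard permutation action, which has no common fixed point when $n \geq 2$. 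This contradiction finishes the proof.

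I do not expect a serious obstacle here; the work is entirely in the right choice of commutative subring and automorphisms. The only items needing a line of care are the bookkeeping of composition order under the contravariant $F$, the verification (from naturality of $\psi$) that the induced self-map of $F(D)$ corresponds to $\Spec(\tau_\sigma)$, and the standard fact that $R$ and $\M_n(R)$ are Morita equivalent; all three are routine. It is worth emphasizing in the write-up that this argument needs only $n \geq 2$ and is completely independent of the dimension hypothesis $n \geq 3$ and of the Kochen--Specker input used for Theorem~\ref{main theorem}.
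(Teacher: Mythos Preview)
Your proposal is correct and follows essentially the same approach as the paper's proof: reduce (2) to (1) via Morita equivalence of $k$ and $\M_n(k)$, then use the diagonal embedding $k^n \hookrightarrow \M_n(k)$ together with the fact that conjugation by permutation matrices restricts to coordinate permutations on the diagonal, so that a singleton $F(\M_n(k))$ forces a fixed point in $\Spec(k^n)$ that cannot exist. The only cosmetic difference is that the paper uses a single $n$-cycle (which already has no fixed points on $\{1,\dots,n\}$) rather than the full action of $S_n$, but this changes nothing of substance.
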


\begin{proof}
First notice that if $F$ satisfies condition~(2) above, then it satisfies condition~(1) because
$\M_n(k)$ is Morita equivalent to $k$, which would mean that $F(\M_n(k)) \cong F(k) \cong \Spec(k)$
is a singleton.
So assume for contradiction that there exists a functor $F$ as above satisfying~(1).

Fix $k$ and $n$ as in condition~(1).
Define $\pi := (1\ 2\ \cdots \ n) \in S_n$, a permutation of the set $\{1,2, \dots, n\}$.
Let $\rho$ be the automorphism of $k^n$ given by $(a_i) \mapsto (a_{\pi(i)})$,
let $P \in \M_n(k)$ be the permutation matrix whose $i$th row is the $\pi(i)$th
standard basis row vector, and let $\sigma$ be the inner automorphism of $\M_n(k)$
given by $\sigma(A) = PAP^{-1}$.
For the final piece of notation, let $\iota \colon k^n \hookrightarrow \M_n(k)$ be the
diagonal embedding.

The following equality of algebra homomorphisms $k^n \to \M_n(k)$ is elementary:
\[
\iota \circ \rho = \sigma \circ \iota.
\]
Applying the contravariant functor $F$ to this equation gives
$F(\rho) \circ F(\iota) = F(\iota) \circ F(\sigma)$.
By hypothesis the set $F(\M_n(k))$ is a singleton. Hence the automorphism $F(\sigma)$
of $F(\M_n(k))$ is the identity. It follows that
\begin{equation}
\label{trouble equation}
F(\rho) \circ F(\iota) = F(\iota).
\end{equation}
On the other hand $F(k^n) \cong \Spec(k^n) = \{1, \dots, n\}$, and under this
isomorphism $F(\rho)$ acts as $\Spec(\rho) = \pi^{-1}$ which has no fixed points.
Thus the image of the unique element of $F(\M_n(k))$ under $F(\iota)$ is distinct
from its image under $F(\rho) \circ F(\iota)$, contradicting~\eqref{trouble equation}
above.
\end{proof}

Because the set of prime ideals of a noncommutative ring is Morita invariant (for instance,
see~\cite[(18.45)]{Lectures}) the proposition above implies Corollary~\ref{prime ideals not functorial}.
Notice that Proposition~\ref{Morita invariant Spec} with $k = \Complex$ and $n = 2$ cannot
be derived from Theorem~\ref{main theorem} because that theorem does not apply to the
algebra $\M_2(\Complex)$, as explicitly shown in Proposition~\ref{the case n = 2}.

Of course, there are many important examples of invariants of rings extending $\Spec$ of a
commutative ring that respect Morita equivalence, aside from the set of prime two-sided ideals of a
ring.
Two examples are the prime torsion theories introduced by O.~Goldman in~\cite{Goldman} and 
the spectrum of an abelian category defined by A.~Rosenberg in~\cite{Rosenberg2}.
(Incidentally, both of these spectra arise from the theory of noncommutative localization.)
Each of these invariants is certainly useful in the study of noncommutative algebra, and they
have appeared in different approaches to noncommutative algebraic geometry.
Thus we emphasize that Proposition~\ref{Morita invariant Spec} does not in any way suggest
that such invariants should be avoided.
It simply reveals that we cannot hope for such invariants to be functors to $\Set$.

\section{The analogous result for $C^*$-algebras}
\label{C* section}

In this section we will prove Theorem~\ref{main C* theorem}, which obstructs extensions
of the Gelfand spectrum functor to noncommutative $C^*$-algebras.
We begin by reviewing some facts and setting some conventions about the category of $C^*$-algebras.
(Many of these basics can be found in~\cite[\S I.5]{Davidson} and~\cite[\S 4.1]{KadisonRingrose}.)
We emphasize that \emph{all $C^*$-algebras considered in this section are assumed to be unital}.
Let $\Cstar$ denote the category whose objects are unital $C^*$-algebras and whose
morphisms are identity-preserving $*$-homomorphisms. Such morphisms do not increase the norm
and are norm-continuous.
The only topology on a $C^*$-algebra to which we will refer is the norm topology.
%; see~\cite[Thm.~4.1.8]{KadisonRingrose}. 
A closed ideal of a $C^*$-algebra is always $*$-invariant, and the resulting factor algebra is a
$C^*$-algebra.
%; see~\cite[Lem.~I.5.1]{Davidson}.
A \emph{$C^*$-subalgebra} of a $C^*$-algebra $A$ is a closed subalgebra $C \subseteq A$
that is invariant under the involution of $A$; such a subalgebra inherits the structure of a
Banach algebra with involution from $A$ and is itself a $C^*$-algebra with respect to this
inherited structure.
If $f \colon A \to B$ is a $*$-homomorphism, then the image $f(A) \subseteq B$ is always a
$C^*$-subalgebra.
%; see~\cite[Thm.~4.1.9]{KadisonRingrose}.
The full subcategory of $\Cstar$ consisting of commutative unital $C^*$-algebras is denoted
by $\CommCstar$.
Finally, the reader may wish to see Section~\ref{introduction section} for the definition
of the (contravariant) Gelfand spectrum functor $\Gelf \colon \CommCstar \to \Set$.

\smallseparate

As in the ring-theoretic case, we define an appropriate category of functors in which we seek
a universal functor.
The inclusion of categories $\CommCstar \hookrightarrow \Cstar$ induces a \emph{restriction}
functor between functor categories
\begin{align*}
\restrict \colon \Fun(\Cstar^{\op}, \Set) &\to \Fun(\CommCstar^{\op}, \Set) \\
F &\mapsto F|_{\CommCstar^{\op}}.
\end{align*}
Again we define the ``fiber category'' over $\Gelf \in \Fun(\CommCstar^{\op}, \Set)$ to be the
category $\restrict^{-1}(\Gelf)$ of pairs $(F, \phi)$ where $F \in \Fun(\Cstar^{\op}, \Set)$ and
$\phi \colon \restrict(F) \overset{\sim}{\longrightarrow} \Gelf$ is an isomorphism of functors; a
morphism $\psi \colon (F, \phi) \to (F', \phi')$ in $\restrict^{-1}(\Gelf)$ is a natural transformation
$\psi \colon F \to F'$ such that $\phi' \circ \restrict(\psi) = \phi$.
Our first goal is to locate a final object of the category $\restrict^{-1}(\Gelf)$, which we view as
a ``universal noncommutative Gelfand spectrum functor.''

\separate

First we define the analogue of the spectrum $\partSpec$ of prime partial ideals.
To do so, it will be useful to think in terms of partial $C^*$-algebras, as defined
by van den Berg and Heunen in~\cite[\S 4]{BergHeunen1}.

\begin{definition}
A partial $C^*$-algebra $P$ is a partial $\Complex$-algebra with an involution
$* \colon P \to P$ and a function $\| \cdot \| \colon P \to \R$ such that any set
$S \subseteq P$ of pairwise commeasurable elements is contained in a set $T \subseteq P$
such that the restricted operations of $P$ endows $T$ with the structure of a
commutative $C^*$-algebra.
Such a subset $T \subseteq P$ is called a \emph{commeasurable $C^*$-subalgebra}
of $P$.
A \emph{$*$-morphism} of partial $C^*$-algebras $f \colon P \to Q$ is a morphism
of partial $\Complex$-algebras satisfying $f(a^*) = f(a)^*$ for all $a \in P$.
\end{definition}

It is very important to note that, unlike the ring-theoretic case, a $C^*$-algebra
with the commeasurability relation of commutativity is generally \emph{not} a partial
$C^*$-algebra.
What is true is that for any $C^*$-algebra $A$, the set $N(A) = \{a \in A : aa^* = a^* a\}$
of normal elements (with commutativity as commeasurability) is always a partial
$C^*$-algebra.
This makes use of the fact that any normal element of a $C^*$-algebra has the
property that its centralizer is a $*$-subalgebra; see~\cite{Fuglede}.
The assignment $A \mapsto N(A)$ defines a functor from the category of $C^*$-algebras
to the category of partial $C^*$-algebras, defined on a morphism $f \colon A \to B$
by restricting and corestricting $f$ to $N(f) \colon N(A) \to N(B)$;
see~\cite[Prop.~3]{BergHeunen1}.
(Because of this subtlety regarding normal elements, we will typically use $P,Q$ to
denote partial $C^*$-algebras and $A,B$ to denote full $C^*$-algebras.)

\begin{definition}
A \emph{partial closed ideal} of a partial $C^*$-algebra $P$ is a subset $I \subseteq P$
such that, for every commeasurable $C^*$-subalgebra $C \subseteq P$, the intersection
$I \cap C$ is a closed ideal of $C$. If, for every commeasurable $C^*$-subalgebra $C$
one has that $I \cap C$ is a maximal ideal of $C$, then $I$ is a
\emph{partial maximal ideal} of $N$.

Let $A$ be a $C^*$-algebra. We say that a subset $I \subseteq A$ is a \emph{partial
closed (resp.\ maximal) ideal} of $A$ if $I \subseteq N(A)$ and $I$ is a partial closed
(resp.\ maximal) ideal of the partial $C^*$-algebra $N(A)$.
\end{definition}

Because we require a partial closed ideal $I$ of a $C^*$-algebra $A$ to consist of
normal elements, $I$ is completely determined by its intersection with all commutative
subalgebras in the sense that $I = \bigcup_C (I \cap C)$, where $C$ ranges over all
commutative $C^*$-subalgebras of $A$.
This is true because an element of $A$ is normal if and only if it is contained in a
commutative $C^*$-subalgebra of $A$.

As in the ring-theoretic case, partial closed ideals behave well under homomorphisms.

\begin{lemma}
Let $f \colon P \to Q$ be a $*$-homomorphism of $C^*$-algebras, and let $I$ be a
partial closed (resp.\ maximal) ideal of $Q$.
The set $f^{-1}(I) \subseteq P$ is a partial closed (resp.\ maximal) partial ideal of $P$.

In particular, if $f \colon A \to B$ is a $*$-homomorphism between $C^*$-algebras and
$I \subseteq N(B) \subseteq B$ is a partial closed (resp.\ maximal) ideal, then
$f^{-1}(I) \cap N(A) \subseteq A$ is a partial closed (resp.\ maximal) ideal of $A$.
\end{lemma}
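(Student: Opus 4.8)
The plan is to imitate the proof of Lemma~\ref{preimage lemma}, working one commeasurable $C^*$-subalgebra at a time and invoking the appropriate fact from commutative $C^*$-algebra theory in place of its purely algebraic analogue. I read the hypothesis ``$f \colon P \to Q$'' as meaning that $f$ is a $*$-morphism of \emph{partial} $C^*$-algebras (the generic letters $P,Q$ signal this), so that $f$ preserves commeasurability, commutes with $*$, and restricts on each commeasurable $C^*$-subalgebra to a unital $*$-algebra homomorphism.

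First I would fix a commeasurable $C^*$-subalgebra $C \subseteq P$ and show that $f^{-1}(I) \cap C$ is a closed (resp.\ maximal) ideal of $C$. Since $f$ preserves commeasurability, $f(C)$ is a set of pairwise commeasurable elements of $Q$, hence by the defining axiom of a partial $C^*$-algebra it lies inside some commeasurable $C^*$-subalgebra $D \subseteq Q$. The restriction $f|_C \colon C \to D$ is then a unital $*$-homomorphism between $C^*$-algebras, hence norm-decreasing and in particular continuous. Because $f(C) \subseteq D$ we have $f^{-1}(I) \cap C = (f|_C)^{-1}(I \cap D)$, and $I \cap D$ is a closed (resp.\ maximal) ideal of $D$ by hypothesis on $I$. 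The preimage of a closed ideal under the continuous homomorphism $f|_C$ is a closed ideal of $C$, which handles the ``closed'' case. For the ``maximal'' case, write $I \cap D = \ker \chi$ for a character $\chi$ of $D$; then $f^{-1}(I) \cap C = \ker(\chi \circ f|_C)$, and $\chi \circ f|_C$ is a unital $*$-homomorphism $C \to \Complex$, i.e.\ a character of $C$, so its kernel is a maximal ideal of $C$. As $C$ was arbitrary, $f^{-1}(I)$ is a partial closed (resp.\ maximal) ideal of $P$.

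For the ``in particular'' clause, let $f \colon A \to B$ be a $*$-homomorphism of full $C^*$-algebras and $I \subseteq N(B)$ a partial closed (resp.\ maximal) ideal. Since $f$ carries normal elements to normal elements, the restriction--corestriction $N(f) \colon N(A) \to N(B)$ is a $*$-morphism of partial $C^*$-algebras, and $f^{-1}(I) \cap N(A) = N(f)^{-1}(I)$ because membership in $I \subseteq N(B)$ is automatic for images of elements of $N(A)$. Applying the first part to $N(f)$ shows this set is a partial closed (resp.\ maximal) ideal of $N(A)$, which is exactly the assertion that it is a partial closed (resp.\ maximal) ideal of $A$.

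The only step needing genuine care is the passage from the partial-algebra map $f$ to an honest $C^*$-algebra homomorphism on each slice: one needs both that the image of a commeasurable $C^*$-subalgebra is contained in a commeasurable $C^*$-subalgebra of the target (the partial $C^*$-algebra axiom) and that a unital $*$-algebra homomorphism between $C^*$-algebras is automatically continuous, so that topological notions such as closedness of ideals are preserved. Everything else is the routine bookkeeping already carried out in Lemma~\ref{preimage lemma}.
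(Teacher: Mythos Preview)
Your proof is correct and follows essentially the same approach as the paper: fix a commeasurable $C^*$-subalgebra $C\subseteq P$, embed $f(C)$ into a commeasurable $C^*$-subalgebra $D\subseteq Q$, and pull back the ideal along the resulting $*$-homomorphism of commutative $C^*$-algebras. The only cosmetic difference is that the paper takes the extra step of observing that $f(C)$ is itself a $C^*$-subalgebra of $D$ (hence a commeasurable $C^*$-subalgebra of $Q$) and then works with the \emph{surjective} map $f|_C\colon C\to f(C)$, whereas you pull back from $D$ directly and handle the maximal case via characters; both routes are valid and yield the same conclusion.
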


\begin{proof}
Let $C \subseteq P$ be a commeasurable $C^*$-subalgebra. We wish to show that
$f^{-1}(I) \cap C$ is a closed (resp.\ maximal) ideal of $C$. First notice that since $C$
consists of pairwise commeasurable elements, so does $f(C) \subseteq Q$.
Thus there is a commeasurable $C^*$-subalgebra $D \subseteq Q$ such that $f(C) \subseteq D$.
But then since $f$ (co)restricts to a $*$-homomorphism of (full) $C^*$-algebras $C \to D$,
its image $f(C) \subseteq D$ is a $C^*$-subalgebra. It follows that $f$ (co)restricts to
a $*$-homomorphism of (full, commutative) $C^*$-algebras $f|_C \colon C \to f(C)$.

Now $I \cap f(C)$ is a closed (resp.\ maximal) ideal in $f(C)$ by hypothesis, so its preimage
under $f|_C$ is a closed (resp.\ maximal) ideal of $C$. On the other hand,
$(f|_C)^{-1}(I \cap f(C))$ is easily seen to be equal to $f^{-1}(I) \cap C$. Hence the latter
is a closed (resp.\ maximal) ideal, as desired.
\end{proof}

This allows us to define a ``partial Gelfand spectrum'' functor.

\begin{definition}
We define a contravariant functor $\partGelf \colon \Cstar \to \Set$ by assigning to
every $C^*$-algebra $A$ the set $\partGelf(A)$ of partial maximal ideals of $A$, and
by assigning to each morphism $f \colon A \to B$ in $\Cstar$ the function
\begin{align*}
\partGelf(B) &\to \partGelf(A) \\
M &\mapsto f^{-1}(M) \cap N(A).
\end{align*}
(The only potential hindrance to functoriality is the preservation of composition of
morphisms, but this is easily verified. Alternatively, this is seen to be a functor
because it is the composite of the functor from $C^*$-algebras to partial $C^*$-algebras
$A \mapsto N(A)$ with the contravariant functor from partial $C^*$-algebras to sets
that sends a partial algebra to the set of its partial maximal ideals.)
\end{definition}

Notice that the restriction of $\partGelf$ to $\CommCstar$ is equal to the Gelfand
spectrum functor $\Gelf$, so that $\partGelf$ is an object of the category
$\restrict^{-1}(\Gelf)$.

As in Proposition~\ref{isomorphic functors}, the functor $\partGelf$ can be recovered
through a limit construction. For a $C^*$-algebra $A$, we let $\C^*(A)$ denote the
partially ordered set of its commutative $C^*$-subalgebras, viewed as a category in the
usual way.

\begin{proposition}
\label{isomorphic C* functors}
The contravariant functor $\partGelf \colon \Cstar \to \Set$ is isomorphic to the functor
defined, for a given $C^*$-algebra $A$, by
\[
A \mapsto \invlim_{C \in \C^*(A)^{\op}} \Gelf(C).
\]
This isomorphism preserves the isomorphism of functors $\partGelf|_{\CommCstar} = \Spec$.
\end{proposition}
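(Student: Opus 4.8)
The plan is to mimic the proof of Proposition~\ref{isomorphic functors} almost verbatim, substituting $C^*$-algebras for rings, commutative $C^*$-subalgebras for commutative subrings, and the Gelfand spectrum $\Gelf$ for $\Spec$. First I would note that the analogue of Lemma~\ref{data determining partial ideal} holds in this setting: a partial maximal ideal $M$ of a $C^*$-algebra $A$ is determined by, and freely recovered from, the family of maximal ideals $(M_C)_{C \in \C^*(A)}$ with $M_C = M \cap C$, subject to the compatibility condition $M_{C'} \cap C = M_C$ whenever $C \subseteq C'$ in $\C^*(A)$. The key point making the union $\bigcup_C M_C$ a genuine partial maximal ideal is the observation recorded just after the definition of partial closed ideal — every normal element of $A$ lies in some commutative $C^*$-subalgebra, so $A = N(A) = \bigcup_{C \in \C^*(A)} C$ (when one restricts to normal elements) and the pieces glue. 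One should check that a maximal ideal of a commutative $C^*$-subalgebra $C$ restricted to a smaller commutative $C^*$-subalgebra $C'$ is again maximal, i.e.\ that the gluing condition in the limit really matches the condition defining partial \emph{maximal} (not merely closed) ideals; this follows because the quotient $C/(M \cap C)$ embeds in $C'/(M \cap C')$... wait, the inclusion goes the other way, so one uses that $C' \subseteq C$ gives $C'/(M\cap C') \hookrightarrow C/(M\cap C) \cong \Complex$, forcing $C'/(M\cap C') \cong \Complex$ as well since both are unital.

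Concretely, for each $C^*$-algebra $A$ I would write the chain of identifications
\[
\invlim\nolimits_{C \in \C^*(A)^{\op}} \Gelf(C)
= \left\{ (M_C) \in \prod_{C} \Gelf(C) \;\middle|\; M_{C'} \cap C = M_C \text{ for all } C \subseteq C' \right\}
\cong \partGelf(A),
\]
where the first equality unwinds the product--equalizer description of the limit over $\C^*(A)^{\op}$ together with the fact that $\Gelf$ applied to an inclusion $i \colon C \hookrightarrow C'$ is the map $\mathfrak{m} \mapsto \mathfrak{m} \cap C$, and the last bijection is the gluing bijection from the preceding paragraph. Then I would verify naturality in $A$: given a morphism $f \colon A \to B$ in $\Cstar$, the induced map on limits is the one assembled from the restrictions $f|_C \colon C \to f(C)$, and tracing an element $(M_D)_{D \in \C^*(B)}$ through the identifications produces exactly the partial maximal ideal $f^{-1}\bigl(\bigcup_D M_D\bigr) \cap N(A) = \partGelf(f)\bigl(\bigcup_D M_D\bigr)$; this is the content already proved in the lemma on preimages of partial maximal ideals. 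Hence the family of bijections constitutes an isomorphism of functors, and since on commutative $A$ the poset $\C^*(A)$ has $A$ itself as top element, the limit collapses to $\Gelf(A)$ and the isomorphism restricts to the identity, giving the final sentence.

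The main obstacle, as opposed to the purely ring-theoretic Proposition~\ref{isomorphic functors}, is the subtlety flagged in the text: a $C^*$-algebra under the commutativity relation is \emph{not} a partial $C^*$-algebra, so one must be careful that ``commeasurable $C^*$-subalgebra of $A$'' genuinely coincides with ``commutative $C^*$-subalgebra of $A$'' after passing to $N(A)$, and that the limit is correctly taken over $\C^*(A)$ rather than over some larger poset of commutative $*$-subalgebras that are not norm-closed. The resolution is to work inside $N(A)$ throughout and invoke Fuglede's theorem (the centralizer of a normal element is a $*$-subalgebra) exactly as in the discussion following the definition of partial $C^*$-algebra, so that every pairwise-commuting set of normal elements of $A$ sits in a commutative $C^*$-subalgebra and the cofinal/gluing machinery of Lemma~\ref{data determining partial ideal} transfers without change. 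Once this bookkeeping is in place the rest is a routine diagram-chase, identical in structure to the proof of Proposition~\ref{isomorphic functors}.
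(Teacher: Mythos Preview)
Your proposal is correct and follows precisely the approach the paper indicates: the paper does not actually give a proof but only remarks that the argument is the $C^*$-analogue of Proposition~\ref{isomorphic functors} via the appropriate version of Lemma~\ref{data determining partial ideal}, with the key subtlety being that a partial closed ideal consists of normal elements and is therefore determined by its intersections with commutative $C^*$-subalgebras. You have supplied exactly this, and in fact added a detail the paper leaves implicit---namely the verification that a maximal ideal of $C$ restricts to a maximal ideal of a sub-$C^*$-algebra $C'$ (via the embedding of quotients into $\Complex$)---which is needed for the bijection to land in $\partGelf(A)$ rather than merely in the set of partial closed ideals.
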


We will not include a proof of this fact, but we will mention the main subtlety. The
appropriate analogue of Lemma~\ref{data determining partial ideal} (replacing each
occurrence of ``commeasurable subalgebra'' with ``commeasurable $C^*$-subalgebra'') still
holds, and is used as before to prove the present result. Here it is crucial to recall that a
partial closed ideal $I \subseteq A$ consists of normal elements, for this ensures that $I$
is determined by its intersection with all commutative $C^*$-subalgebras of $A$.

Just as before, this allows one to show that $\partGelf$ is a ``universal Gelfand
spectrum functor.''

\begin{theorem}
\label{universal Gelf theorem}
The functor $\partGelf \colon \Cstarop \to \Set$, with the identity natural transformation
$\partGelf|_{\CommCstar} \to \Gelf$, is the Kan extension of the functor
$\Gelf \colon \CommCstarop \to \Set$ along the embedding $\CommCstarop \subseteq \Cstarop$.
In particular, $\partGelf$ is a terminal object in the category $\restrict^{-1}(\Gelf)$.
\end{theorem}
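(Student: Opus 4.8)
The plan is to mirror, essentially verbatim, the argument given for Theorem~\ref{universal Spec theorem}, since the only structural facts used there are (a) the existence of a concrete limit formula for the universal functor, provided here by Proposition~\ref{isomorphic C* functors}, and (b) the fact that every object of the ambient category is the filtered union (indeed the colimit over its poset of commutative subobjects) of its commutative subobjects. Both ingredients are available in the $C^*$-setting: $\C^*(A)$ plays the role of $\C(R)$, and every normal element of $A$ lies in a commutative $C^*$-subalgebra, so that $A$ is determined by $\C^*(A)$ in the relevant sense. So the first thing I would do is invoke Proposition~\ref{isomorphic C* functors} to replace $\partGelf(A)$ by $\invlim_{C\in\C^*(A)^{\op}}\Gelf(C)$ throughout, which turns the claimed Kan extension property into a statement about limits.

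Next, given a contravariant functor $F\colon \Cstar\to\Set$ together with a natural transformation $\eta\colon F|_{\CommCstar}\to\Gelf$, I would fix a $C^*$-algebra $A$ and, for each $C\in\C^*(A)$, compose the restriction map $F(A)\to F(C)$ (coming from the inclusion $C\hookrightarrow A$) with $\eta_C\colon F(C)\to\Gelf(C)$ to obtain maps $F(A)\to\Gelf(C)$. Naturality of $\eta$ together with functoriality of $F$ shows these maps are compatible with the transition maps $\Gelf(C')\to\Gelf(C)$ for inclusions $C\subseteq C'$, i.e.\ they assemble into a cone over the diagram $C\mapsto\Gelf(C)$ indexed by $\C^*(A)^{\op}$. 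The universal property of the limit then yields a unique map $\delta_A\colon F(A)\to\invlim_{C\in\C^*(A)^{\op}}\Gelf(C)\cong\partGelf(A)$. One checks that the $\delta_A$ are natural in $A$ (because both the defining cone and the limit are natural in $A$), that $\delta$ restricts to $\eta$ on $\CommCstar$ (immediate from the construction, since for a commutative $C^*$-algebra $A$ the poset $\C^*(A)$ has $A$ itself as top element), and that $\delta$ is the unique natural transformation with this property (forced by the uniqueness clause in the universal property of each $\delta_A$). This establishes that $\partGelf$, equipped with the identity $\partGelf|_{\CommCstar}\to\Gelf$, is the right Kan extension of $\Gelf$ along $\CommCstarop\subseteq\Cstarop$. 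The final sentence of the theorem follows exactly as in the proof of Theorem~\ref{universal Spec theorem}, by specializing the Kan extension property to those $F$ for which $\eta$ is an isomorphism: this says precisely that $\partGelf$ is terminal in $\restrict^{-1}(\Gelf)$.

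\textbf{Main obstacle.} There is no genuine obstacle in the categorical bookkeeping; the one point requiring care — and the reason the analogue of Lemma~\ref{data determining partial ideal} goes through — is that a partial closed ideal of $A$ is by definition a set of normal elements and hence is reconstructed from its traces $I\cap C$ on the commutative $C^*$-subalgebras $C$. This is exactly what makes $\partGelf(A)$ coincide with the limit $\invlim_{C\in\C^*(A)^{\op}}\Gelf(C)$ and is the content of Proposition~\ref{isomorphic C* functors}; once that proposition is granted, the present proof is a routine transcription of the ring-theoretic one and I would keep it short, referring to the proof of Theorem~\ref{universal Spec theorem} for the diagram and the details of naturality and uniqueness.
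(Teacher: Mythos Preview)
Your proposal is correct and follows exactly the approach the paper intends: the paper does not give an explicit proof of this theorem, stating only ``Just as before, this allows one to show that $\partGelf$ is a `universal Gelfand spectrum functor','' thereby referring the reader back to the proof of Theorem~\ref{universal Spec theorem} with Proposition~\ref{isomorphic C* functors} replacing Proposition~\ref{isomorphic functors}. Your write-up faithfully carries out that transcription, including the one genuinely $C^*$-specific point (partial closed ideals consist of normal elements, so they are recovered from their intersections with commutative $C^*$-subalgebras), which the paper itself flags as the only subtlety.
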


\separate

Our next goal is to connect the functor $\partGelf$ to the Kochen-Specker Theorem,
in a manner similar to that of Section~\ref{Kochen-Specker section}.
We have the following analogue of Proposition~\ref{finite dimensional partial Spec}.
Its proof is completely analogous, and thus is omitted.

\begin{proposition}
\label{partial Gelf and morphisms}
Let $P$ be a partial $C^*$-algebra.
There is a bijection between the set of partial maximal ideals of $P$ and the set of
$*$-morphisms of partial $C^*$-algebras $f \colon P \to \Complex$, which associates to each
such morphism $f$ the inverse image $f^{-1}(0)$.

In particular, if $A$ is a $C^*$-algebra then there is a bijection between $\partGelf(A)$
and the set of $*$-morphisms of partial $C^*$-algebras $f \colon N(A) \to \Complex$.
\end{proposition}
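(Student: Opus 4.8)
The plan is to follow the proof of Proposition~\ref{finite dimensional partial Spec} essentially word for word, with the role played there by ``a finite dimensional commutative algebra over an algebraically closed field, all of whose prime ideals are maximal with residue field $k$'' now played by ``a commutative unital $C^*$-algebra, all of whose maximal ideals are closed with residue field $\Complex$.'' First I would record the two ingredients. By the definition of a partial $C^*$-algebra, every $r \in P$ is contained in a commeasurable $C^*$-subalgebra (for instance the $C^*$-subalgebra generated by $r$ and $1$), so $P$ is the union of its commeasurable $C^*$-subalgebras. And for a commutative unital $C^*$-algebra $C$, every maximal ideal $\mathfrak{m} \lhd C$ is closed with $C/\mathfrak{m} \cong \Complex$; more precisely, sending a character $C \to \Complex$ to its kernel is a bijection from the set of characters of $C$ onto $\Gelf(C)$ (see~\cite[Thm.~I.2.5]{Davidson}), and each character is automatically $*$-preserving.

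Next I would treat the map $f \mapsto f^{-1}(0)$. Given a $*$-morphism of partial $C^*$-algebras $f \colon P \to \Complex$, note that $\{0\}$ is a partial maximal ideal of the partial $C^*$-algebra $\Complex$ (whose only commeasurable $C^*$-subalgebra is $\Complex$ itself), so the preimage lemma above shows that $f^{-1}(0)$ is a partial maximal ideal of $P$. Moreover, for each commeasurable $C^*$-subalgebra $C \subseteq P$ the restriction $f|_C$ is a unital $*$-homomorphism $C \to \Complex$, i.e.\ a character, whose kernel is $f^{-1}(0) \cap C$; hence $f|_C$ is the canonical map $C \twoheadrightarrow C/(f^{-1}(0) \cap C) \overset{\sim}{\longrightarrow} \Complex$.

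Conversely, given a partial maximal ideal $M$ of $P$, for each commeasurable $C^*$-subalgebra $C$ the ideal $M \cap C$ is maximal in $C$, so there is a unique character $g_C \colon C \twoheadrightarrow C/(M \cap C) \overset{\sim}{\longrightarrow} \Complex$. For commeasurable $C^*$-subalgebras $C \subseteq C'$ one has $(M \cap C') \cap C = M \cap C$, and the induced $\Complex$-algebra homomorphism $C/(M \cap C) \to C'/(M \cap C')$ is a map between copies of $\Complex$, hence the identity, so $g_{C'}|_C = g_C$. Therefore $r \mapsto g_C(r)$, for any commeasurable $C^*$-subalgebra $C \ni r$, is a well-defined function $f_M \colon P \to \Complex$. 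Its restriction to every commeasurable $C^*$-subalgebra is a character and in particular a unital $*$-homomorphism, and since every pair of elements of $\Complex$ is commeasurable, $f_M$ is a $*$-morphism of partial $C^*$-algebras; by construction $f_M^{-1}(0) = M$.

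Finally I would check that $M \mapsto f_M$ and $f \mapsto f^{-1}(0)$ are mutually inverse: the identity $f_M^{-1}(0) = M$ was just noted, and for a $*$-morphism $f$ both $f$ and $f_{f^{-1}(0)}$ restrict on each commeasurable $C^*$-subalgebra $C$ to the unique character of $C$ with kernel $f^{-1}(0) \cap C$, so they agree on $C$; since $P$ is the union of its commeasurable $C^*$-subalgebras, $f = f_{f^{-1}(0)}$. The ``in particular'' clause is then immediate upon taking $P = N(A)$ and recalling that $\partGelf(A)$ is defined to be the set of partial maximal ideals of $N(A)$. I do not expect any deep obstacle: the mathematical content is the Gelfand correspondence cited above plus the gluing bookkeeping. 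The one point that warrants care is verifying that the glued function $f_M$ is a genuine $*$-morphism of partial $C^*$-algebras and not merely a map of sets; this is precisely where one invokes the characterization of a $*$-morphism as a function whose restriction to each commeasurable $C^*$-subalgebra is a $*$-homomorphism, which reduces it to the already-established fact that each $g_C$ is a character.
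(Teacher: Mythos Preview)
Your proposal is correct and is exactly the argument the paper intends: the paper omits the proof entirely, stating only that it is ``completely analogous'' to that of Proposition~\ref{finite dimensional partial Spec}, and what you have written is precisely that analogous argument, with the Gelfand correspondence for commutative unital $C^*$-algebras replacing the fact that finite-dimensional commutative algebras over an algebraically closed field have all residue fields equal to $k$.
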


We have effectively reduced the proof of Theorem~\ref{main C* theorem} to a question of the
existence of morphisms of partial $C^*$-algebras. Thus we are in a position to apply the
Kochen-Specker Theorem. The following corollary to Kochen-Specker is proved just like
Corollary~\ref{Kochen-Specker corollary}, relying upon Lemma~\ref{eigenvalue lemma}.

\begin{corollary}[A corollary of the Kochen-Specker Theorem]
\label{Kochen-Specker C* corollary}
For any $n \geq 3$, there is no $*$-morphism of partial $C^*$-algebras
$N(\M_n(\Complex)) \to \Complex$.
\end{corollary}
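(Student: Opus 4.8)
The plan is to reduce the statement about $*$-morphisms of the partial $C^*$-algebra $N(\M_n(\Complex))$ to the Kochen-Specker Theorem~\ref{Kochen-Specker theorem}, exactly as in the proof of Corollary~\ref{Kochen-Specker corollary}. First I would suppose, for contradiction, that a $*$-morphism of partial $C^*$-algebras $f \colon N(\M_n(\Complex)) \to \Complex$ exists. Since a partial $C^*$-algebra is in particular a partial $\Complex$-algebra and every matrix in $\M_n(\Complex)$ is algebraic over $\Complex$, Lemma~\ref{eigenvalue lemma} applies: for every normal $r \in \M_n(\Complex)$, the scalar $f(r)$ is an eigenvalue of $r$. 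In particular, if $r$ is self-adjoint then its eigenvalues are real, so $f(r) \in \R$.

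Next I would observe that the self-adjoint elements $A_{sa}$ of $A := \M_n(\Complex)$ are all normal, hence $A_{sa} \subseteq N(A)$, and that $A_{sa}$ is closed under the partial operations inherited from $N(A)$ (sums and products of commuting self-adjoint elements are self-adjoint, and real scalar multiplication preserves self-adjointness). Thus the restriction $f|_{A_{sa}} \colon A_{sa} \to \R$ is well-defined as a map into $\R$ by the previous paragraph, and it preserves commeasurability, real-scalar multiplication, partial sums, partial products, and the elements $0$ and $1$ — that is, it is a morphism of partial $\R$-algebras $A_{sa} \to \R$. But the Kochen-Specker Theorem~\ref{Kochen-Specker theorem} asserts that no such morphism exists for $n \geq 3$. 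This contradiction establishes the corollary.

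I do not anticipate any genuine obstacle here; the argument is a direct transcription of Corollary~\ref{Kochen-Specker corollary}, with ``morphism of partial $\Complex$-algebras $\M_n(\Complex) \to \Complex$'' replaced by ``$*$-morphism of partial $C^*$-algebras $N(\M_n(\Complex)) \to \Complex$.'' The only point that warrants a moment's care is checking that restricting the domain from $N(A)$ to $A_{sa}$ is legitimate — namely that $A_{sa}$ is a sub-partial-algebra of $N(A)$ in the appropriate sense and that the restricted map lands in $\R$ rather than merely in $\Complex$ — but this is exactly what Lemma~\ref{eigenvalue lemma} (spectrum of a self-adjoint matrix is real) and the elementary closure properties of $A_{sa}$ provide.
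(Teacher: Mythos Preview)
Your proposal is correct and follows essentially the same route as the paper, which simply says the corollary ``is proved just like Corollary~\ref{Kochen-Specker corollary}, relying upon Lemma~\ref{eigenvalue lemma}.'' One tiny remark: the algebraicity hypothesis you invoke is not actually part of Lemma~\ref{eigenvalue lemma} (you may be conflating it with Proposition~\ref{finite dimensional partial Spec}), and when applying that lemma with $R = N(\M_n(\Complex))$ rather than $\M_n(\Complex)$ you implicitly use that a normal matrix with no inverse \emph{inside $N(\M_n(\Complex))$} is genuinely singular --- true because the matrix inverse of an invertible normal element is a polynomial in that element, hence normal and commeasurable --- but this is routine and does not affect the argument.
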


\separate

We are now ready to give a proof of Theorem~\ref{main C* theorem}, obstructing
extensions of the Gelfand spectrum functor.

\begin{proof}[Proof of Theorem~\ref{main C* theorem}]
Fix $n \geq 3$ and let $A = \M_n(\Complex) \in \Cstar$. 
By Theorem~\ref{universal Gelf theorem} there is a natural transformation
$F \to \partGelf$.
The set $\partGelf(A)$ is in bijection with the set of $*$-morphisms of partial
$C^*$-algebras $N(A) \to \Complex$ according to Proposition~\ref{partial Gelf and morphisms}.
By Corollary~\ref{Kochen-Specker C* corollary} of the Kochen-Specker Theorem there
are no such $*$-morphisms. Thus $\partGelf(A) = \varnothing$, and the existence
of a function $F(A) \to \partGelf(A)$ gives $F(A) = \varnothing$.
\end{proof}

\separate

The corollaries to Theorem~\ref{main theorem} given in Section~\ref{proof section} all have
analogues in the setting of $C^*$-algebras. For the most part we will omit the proofs of these
results because they are such straightforward adaptations of those given in
Section~\ref{proof section}.
First we provide an analogue of Corollary~\ref{matrix algebra corollary}, and we include its
proof only to illustrate how our restriction to unital $C^*$-algebras comes into play.

\begin{corollary}
Let $F \colon \Cstar \to \Set$ be a contravariant functor whose restriction to the full subcategory
of commutative $C^*$-algebras is isomorphic to $\Gelf$. Then for any $C^*$-algebra $A$ and
integer $n \geq3$, one has $F(\M_n(A)) = \varnothing$.
\end{corollary}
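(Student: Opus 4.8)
The plan is to mimic the proof of Corollary~\ref{matrix algebra corollary} from the ring-theoretic setting, using the fact that a unital $*$-homomorphism $\Complex \to \M_n(A)$ is automatic. First I would note that every unital $C^*$-algebra $A$ comes equipped with a canonical unital $*$-homomorphism $\Complex \to A$ sending $\lambda \mapsto \lambda 1_A$. Applying the functor $\M_n(-)$ to this, we obtain a unital $*$-homomorphism $\M_n(\Complex) \to \M_n(A)$ in $\Cstar$. (This is where the restriction to unital $C^*$-algebras is essential: without units, there is in general no $*$-homomorphism $\M_n(\Complex) \to \M_n(A)$ at all, since a nonunital $C^*$-algebra need not contain a copy of $\Complex$, let alone a system of matrix units.)

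Next I would apply the contravariant functor $F$ to this morphism to obtain a function of sets $F(\M_n(A)) \to F(\M_n(\Complex))$. By Theorem~\ref{main C* theorem}, for $n \geq 3$ we have $F(\M_n(\Complex)) = \varnothing$. The existence of a function from $F(\M_n(A))$ into the empty set forces $F(\M_n(A)) = \varnothing$, which is the desired conclusion.

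There is no real obstacle here; the only point requiring a moment's care is the verification that $\M_n(A)$ is again a (unital) $C^*$-algebra and that the induced map $\M_n(\Complex) \to \M_n(A)$ is a morphism in $\Cstar$, i.e.\ a unital $*$-homomorphism. This is standard: one equips $\M_n(A) \cong A \otimes \M_n(\Complex)$ with its unique $C^*$-norm, the involution is entrywise conjugate-transpose, and the unit is the identity matrix with $1_A$ on the diagonal; the map induced by $\lambda \mapsto \lambda 1_A$ clearly preserves all of this structure and sends the identity to the identity. Thus the entire argument is a one-line application of functoriality once the canonical morphism is in hand, exactly paralleling the proof of Corollary~\ref{matrix algebra corollary}.

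\begin{proof}
Since $A$ is a unital $C^*$-algebra, there is a canonical unital $*$-homomorphism
$\Complex \to A$ sending $\lambda \mapsto \lambda 1_A$. This induces a unital
$*$-homomorphism $\M_n(\Complex) \to \M_n(A)$, i.e.\ a morphism in $\Cstar$. (Here we
use in an essential way that our $C^*$-algebras are unital, for otherwise there need not
be any morphism $\M_n(\Complex) \to \M_n(A)$.) Applying the contravariant functor $F$
yields a function of sets $F(\M_n(A)) \to F(\M_n(\Complex))$. By
Theorem~\ref{main C* theorem}, the latter set is empty when $n \geq 3$. The existence of
a function from $F(\M_n(A))$ into $\varnothing$ forces $F(\M_n(A)) = \varnothing$.
\end{proof}
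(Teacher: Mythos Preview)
Your proof is correct and follows essentially the same approach as the paper: both use the canonical unital $*$-homomorphism $\Complex \to A$ to obtain a morphism $\M_n(\Complex) \to \M_n(A)$ in $\Cstar$, apply $F$ to get a map into $F(\M_n(\Complex)) = \varnothing$, and conclude. The paper likewise singles out the unital hypothesis as the one point worth noting.
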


\begin{proof}
Because $A$ is unital, there is a canonical morphism of $C^*$-algebras $\Complex \to \Complex \cdot 1_A \subseteq A$.
This induces a $*$-morphism $\M_n(\Complex) \to \M_n(A)$. Thus there is a function
of sets $F(\M_n(A)) \to F(\M_n(\Complex))$, and the latter set is empty by Theorem~\ref{main C* theorem}.
Hence $F(\M_n(A)) = \varnothing$.
\end{proof}

As in the discussion following Corollary~\ref{matrix algebra corollary}, this result shows that if $A$ is a
unital $C^*$-algebra for which there is an isomorphism $A \cong \M_n(A)$ for some $n \geq 2$,
then for any functor $F$ as above, $F(A) = \varnothing$. As an example, we may take $A$ to
be the algebra of bounded operators on an infinite-dimensional Hilbert space.

Next is the appropriate analogue of Corollary~\ref{abelianization corollary}.

\begin{corollary}
Let $\alpha \colon \Cstar \to \CommCstar$ be a functor whose restriction to $\CommCstar$
is isomorphic to the identity functor. Then for every $C^*$-algebra $A$ and every $n \geq 3$,
one has $\alpha(\M_n(A)) = 0$
\end{corollary}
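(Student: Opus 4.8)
The plan is to run the argument of Corollary~\ref{abelianization corollary} in the $C^*$-algebraic setting, with the $C^*$-algebraic analogue of Corollary~\ref{matrix algebra corollary} proved just above playing the role that Corollary~\ref{matrix algebra corollary} played there. First I would form the composite contravariant functor $F := \Gelf \circ \alpha \colon \Cstar \to \Set$. Since $\alpha|_{\CommCstar}$ is isomorphic to the identity functor on $\CommCstar$, the restriction $F|_{\CommCstar} = \Gelf \circ (\alpha|_{\CommCstar})$ is isomorphic to $\Gelf$, so $F$ meets the hypotheses of that corollary. Applying it, for every unital $C^*$-algebra $A$ and every $n \geq 3$ one obtains
\[
\Gelf(\alpha(\M_n(A))) = F(\M_n(A)) = \varnothing.
\]

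It then remains to observe that a commutative unital $C^*$-algebra $B$ with $\Gelf(B) = \varnothing$ must be the zero algebra. This is standard: $\Gelf(B)$ is by definition the set of maximal ideals of $B$, and Zorn's Lemma guarantees that every nonzero unital ring possesses at least one maximal ideal; hence $\Gelf(B) = \varnothing$ forces $B = 0$. Taking $B = \alpha(\M_n(A))$---which is automatically an object of $\CommCstar$ since that is the codomain of $\alpha$---yields the conclusion $\alpha(\M_n(A)) = 0$.

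There is no genuine obstacle here: once the preceding corollary is in hand, the proof is a one-line composition of functors followed by the elementary remark about maximal ideals. The only subtlety to keep in mind is that the statement asserts $\alpha(\M_n(A)) = 0$, so we are implicitly permitting the zero algebra as an object of $\CommCstar$ (equivalently, not demanding $0 \neq 1$ in the definition of a unital $C^*$-algebra), exactly as the ring-theoretic Corollary~\ref{abelianization corollary} permitted the zero ring. If desired, one could append---parallel to Corollary~\ref{abelianization corollary}---the observation that such an $\alpha$ cannot be faithful, by examining the induced map on $\Hom_{\Cstar}(\M_n(\Complex), \M_n(\Complex))$ and using that $\M_n(\Complex)$ has nontrivial inner automorphisms; but this is not part of the stated corollary.
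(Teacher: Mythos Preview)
Your proof is correct and follows exactly the approach the paper intends: the paper omits the proof, stating only that it is a ``straightforward adaptation'' of Corollary~\ref{abelianization corollary}, and your argument---compose $\alpha$ with $\Gelf$, invoke the preceding $C^*$-analogue of Corollary~\ref{matrix algebra corollary}, then note that a nonzero commutative unital $C^*$-algebra has a maximal ideal---is precisely that adaptation.
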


Finally, there is the following analogue of Corollary~\ref{prime ideals not functorial}.

\begin{corollary}
There is no contravariant functor $F \colon \Cstar \to \Set$ whose restriction to the
full subcategory $\CommCstar$ is isomorphic to $\Gelf$ and such that, for every $C^*$-algebra
$A$, the set $F(R)$ is in bijection with the set of primitive ideals of $A$.
\end{corollary}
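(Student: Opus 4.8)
The plan is to follow the template of Corollary~\ref{prime ideals not functorial}, substituting Theorem~\ref{main C* theorem} for Theorem~\ref{main theorem}. First I would argue by contradiction: assume that a contravariant functor $F \colon \Cstar \to \Set$ exists with $F|_{\CommCstar} \cong \Gelf$ and with $F(A)$ in bijection with the set of primitive ideals of $A$ for every unital $C^*$-algebra $A$.

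Next I would compute the relevant primitive ideal space. Fix $n \geq 3$ and let $A = \M_n(\Complex)$. Since $A$ is simple, its only closed two-sided ideals are $0$ and $A$; equivalently, the standard representation of $\M_n(\Complex)$ on $\Complex^n$ is, up to unitary equivalence, its unique irreducible representation, and it is faithful. Hence the zero ideal is the unique primitive ideal of $A$, so the set of primitive ideals of $A$ is a singleton. By the hypothesis on $F$, the set $F(\M_n(\Complex))$ is therefore a singleton, and in particular nonempty.

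Finally I would invoke Theorem~\ref{main C* theorem}: since $F$ is a contravariant functor $\Cstar \to \Set$ whose restriction to $\CommCstar$ is isomorphic to $\Gelf$, that theorem forces $F(\M_n(\Complex)) = \varnothing$ for $n \geq 3$. This contradicts the nonemptiness established in the previous step, which completes the proof.

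I do not anticipate a genuine obstacle here: the argument is a direct transcription of the ring-theoretic corollary. The only point deserving an explicit remark is the identification of the primitive ideal space of $\M_n(\Complex)$ with a one-point set, which rests on the simplicity of matrix algebras. (One could alternatively bypass this identification by adapting Proposition~\ref{Morita invariant Spec}, using that the primitive ideal space is Morita invariant, but the direct route via Theorem~\ref{main C* theorem} is the shortest.)
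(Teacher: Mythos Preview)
Your proof is correct and matches the paper's own treatment: the paper states that the corollary follows either from Theorem~\ref{main C* theorem} (exactly as you argue, via the simplicity of $\M_n(\Complex)$) or from the $C^*$-analogue of Proposition~\ref{Morita invariant Spec}, and you have carried out the first of these and even noted the second as an alternative.
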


This corollary can be obtained as a consequence either of Theorem~\ref{main C* theorem} or
of the obvious analogue of Proposition~\ref{Morita invariant Spec}. In fact, the
proof of the latter proposition (with $k = \Complex$) extends directly to the setting
of $C^*$-algebras because all of the homomorphisms used in its proof are actually
$*$-homomorphisms.

\section*{Acknowledgments}

I am grateful to Andre Kornell and Matthew Satriano for several stimulating conversations in
the early stages of this work, George Bergman for many insightful comments, Lance Small
for helpful references to the literature, and Theo Johnson-Freyd for advice on creating
Ti\emph{k}z diagrams.
Finally, I thank the referee for a number of suggestions and corrections that improved the
readability of the paper.

\bibliographystyle{amsplain}
\bibliography{EmptySpec.v3}

\providecommand{\bysame}{\leavevmode\hbox to3em{\hrulefill}\thinspace}
\providecommand{\MR}{\relax\ifhmode\unskip\space\fi MR }
% \MRhref is called by the amsart/book/proc definition of \MR.
\providecommand{\MRhref}[2]{%
  \href{http://www.ams.org/mathscinet-getitem?mr=#1}{#2}
}
\providecommand{\href}[2]{#2}
\begin{thebibliography}{10}

\bibitem{ArtinSchelter}
M.~Artin and W.~Schelter, \emph{Integral ring homomorphisms}, Adv. in Math.
  \textbf{39} (1981), no.~3, 289--329. \MR{614165 (83e:16015)}

\bibitem{BergHeunen1}
Benno van~den Berg and Chris Heunen, \emph{Noncommutativity as a colimit}, to
  appear in Appl. Categor. Struct. (2011), available at
  \texttt{http://dx.doi.org/10.1007/s10485-011-9246-3}.

\bibitem{BergHeunen2}
\bysame, \emph{No-go theorems for functorial localic spectra of noncommutative
  rings}, \texttt{arXiv:1101.5924}, preprint.

\bibitem{Cohn}
P.~M. Cohn, \emph{The affine scheme of a general ring}, Applications of sheaves
  ({P}roc. {R}es. {S}ympos. {A}ppl. {S}heaf {T}heory to {L}ogic, {A}lgebra and
  {A}nal., {U}niv. {D}urham, {D}urham, 1977), Lecture Notes in Math., vol. 753,
  Springer, Berlin, 1979, pp.~197--211. \MR{555546 (81e:16002)}

\bibitem{Davidson}
Kenneth~R. Davidson, \emph{{$C^*$}-{A}lgebras by {E}xample}, Fields Institute
  Monographs, vol.~6, American Mathematical Society, Providence, RI, 1996.
  \MR{1402012 (97i:46095)}

\bibitem{Fuglede}
Bent Fuglede, \emph{A commutativity theorem for normal operators}, Proc. Nat.
  Acad. Sci. U. S. A. \textbf{36} (1950), 35--40. \MR{0032944 (11,371c)}

\bibitem{Goldman}
Oscar Goldman, \emph{Rings and modules of quotients}, J. Algebra \textbf{13}
  (1969), 10--47. \MR{0245608 (39 \#6914)}

\bibitem{HeunenLandsmanSpitters}
Chris Heunen, Nicolaas~P. Landsman, and Bas Spitters, \emph{A topos for
  algebraic quantum theory}, Comm. Math. Phys. \textbf{291} (2009), no.~1,
  63--110. \MR{2530156}

\bibitem{Jacobson}
N.~Jacobson, \emph{Schur's theorems on commutative matrices}, Bull. Amer. Math.
  Soc. \textbf{50} (1944), 431--436. \MR{0010540 (6,33b)}

\bibitem{KadisonRingrose}
Richard~V. Kadison and John~R. Ringrose, \emph{Fundamentals of the {T}heory of
  {O}perator {A}lgebras, {V}ol. {I}: {E}lementary {T}heory}, Graduate Studies
  in Mathematics, vol.~15, American Mathematical Society, Providence, RI, 1997,
  Reprint of the 1983 original. \MR{1468229 (98f:46001a)}

\bibitem{KochenSpecker}
Simon Kochen and E.~P. Specker, \emph{The problem of hidden variables in
  quantum mechanics}, J. Math. Mech. \textbf{17} (1967), 59--87. \MR{0219280
  (36 \#2363)}

\bibitem{Lectures}
T.~Y. Lam, \emph{Lectures on {M}odules and {R}ings}, Graduate Texts in
  Mathematics, vol. 189, Springer-Verlag, New York, 1999. \MR{1653294
  (99i:16001)}

\bibitem{Letzter}
Edward~S. Letzter, \emph{On continuous and adjoint morphisms between
  non-commutative prime spectra}, Proc. Edinb. Math. Soc. (2) \textbf{49}
  (2006), no.~2, 367--381. \MR{2243792 (2007d:16003)}

\bibitem{MacLane}
Saunders Mac~Lane, \emph{Categories for the {W}orking {M}athematician}, second
  ed., Graduate Texts in Mathematics, vol.~5, Springer-Verlag, New York, 1998.
  \MR{1712872 (2001j:18001)}

\bibitem{Procesi}
C.~Procesi, \emph{Non commutative {J}acobson-rings}, Ann. Scuola Norm. Sup.
  Pisa (3) \textbf{21} (1967), 281--290. \MR{0224652 (37 \#251)}

\bibitem{Rosenberg1}
Alexander~L. Rosenberg, \emph{The left spectrum, the {L}evitzki radical, and
  noncommutative schemes}, Proc. Nat. Acad. Sci. U.S.A. \textbf{87} (1990),
  no.~21, 8583--8586. \MR{1076775 (92d:14001)}

\bibitem{Rosenberg2}
\bysame, \emph{Noncommutative local algebra}, Geom. Funct. Anal. \textbf{4}
  (1994), no.~5, 545--585. \MR{1296568 (95m:14003)}

\bibitem{Schur}
I.~Schur, \emph{Zur {T}heorie vertauschbaren {M}atrizen}, J. Reine Angew. Math.
  \textbf{130} (1905), 66--76.

\bibitem{Smith}
S.~Paul Smith, \emph{Subspaces of non-commutative spaces}, Trans. Amer. Math.
  Soc. \textbf{354} (2002), no.~6, 2131--2171. \MR{1885647 (2003f:14002)}

\bibitem{vanOystaeyenVerschoren}
Freddy M.~J. Van~Oystaeyen and Alain H. M.~J. Verschoren, \emph{Non-commutative
  {A}lgebraic {G}eometry: An {I}ntroduction}, Lecture Notes in Mathematics,
  vol. 887, Springer-Verlag, Berlin, 1981. \MR{639153 (85i:16006)}

\end{thebibliography}
\end{document}